\newcommand{\nm}{\noalign{\smallskip}}
\newcommand{\ds}{\displaystyle}
\pgfplotsset{compat=newest}
 \pgfplotsset{width=15cm}
\pgfplotsset{plot coordinates/math parser=false}
\newlength\figureheight
\newlength\figurewidth
\newtheorem{de}{Definition}[section]
\newtheorem{theo}{Theorem}[section]
\newtheorem{prop}[theo]{Proposition}
\newtheorem{lemm}[theo]{Lemma}
\newcommand{\C}{{\cal C}}
\newcommand{\R}{\mathbb{R}}
\numberwithin{equation}{section} \numberwithin{figure}{section}
\newcommand{\dr}{\partial}
\newcommand{\g}{\nabla}
\newcommand{\Bbf}{\mathbf{B}}
\newcommand{\Ebf}{\mathbf{E}}
\newcommand{\Jbf}{\mathbf{J}}
\newcommand{\Abf}{\mathbf{A}}
\newcommand{\Fbf}{\mathbf{F}}
\newcommand{\fbf}{\mathbf{f}}
\newcommand{\ubf}{\mathbf{u}}
\newcommand{\ebf}{\mathbf{e}}
\newcommand{\nubf}{\boldsymbol{\nu}}
\title{A mathematical and numerical framework for magnetoacoustic tomography with magnetic induction\thanks{\footnotesize This work was supported  by the
ERC Advanced
Grant Project MULTIMOD--267184.}}
\author{Habib Ammari\thanks{\footnotesize Department of Mathematics and Applications,
Ecole Normale Sup\'erieure, 45 Rue d'Ulm, 75005 Paris, France
(habib.ammari@ens.fr, simon.boulmier@polytechnique.edu, pierre.millien@ens.fr).} \and 
\and Simon Boulmier\footnotemark[2] \and Pierre Millien\footnotemark[2] }
\begin{document}
\maketitle
\begin{abstract}
We provide a mathematical analysis and a numerical framework for magnetoacoustic tomography with magnetic induction. The imaging problem is to reconstruct the conductivity distribution of biological tissue from measurements of the Lorentz force induced tissue vibration.  We begin with reconstructing from the acoustic measurements the divergence of the Lorentz force, which is acting as the source term in the acoustic wave equation. Then we recover the electric current density from the divergence of the Lorentz force. To solve the nonlinear inverse conductivity problem, we introduce an optimal control method for reconstructing the conductivity from the electric current density.
We prove its convergence and stability. We also present a point fixed approach and prove its convergence to the true solution. A new direct reconstruction scheme involving a partial differential equation is then proposed based on viscosity-type regularization to a transport equation satisfied by the electric current density field. We prove that solving such an equation yields the true conductivity distribution as the regularization parameter approaches zero. Finally, we test the three schemes numerically in the presence of measurement noise, quantify their stability and resolution, and compare their performance.
\end{abstract}

\bigskip

\noindent {\footnotesize Mathematics Subject Classification
(MSC2000): 35R30, 35B30.}

\noindent {\footnotesize Keywords:  electrical conductivity imaging, hybrid imaging, Lorentz force induced tissue vibration, optimal control, convergence, point fixed algorithm, orthogonal field method, viscosity-type regularization.}

\section{Introduction}

The Lorentz force plays a key role in magneto-acoustic tomographic techniques \cite{rothrev}. Several approaches have been developed with the aim of providing electrical impedance information at a spatial resolution on the scale of ultrasound wavelengths. These include  ultrasonically-induced Lorentz force imaging \cite{ammari2014mathematical, GraslandMongrain2013} and magneto-acoustic tomography with magnetic induction \cite{xu2005magnetoacoustic,roth}.  

Electrical conductivity varies widely among soft tissue types and pathological states \cite{Foster1989, morimoto1993study} and its measurement can provide
information about the physiological and pathological conditions of tissue \cite{laure}.
Acousto-magnetic tomographic techniques have the potential to detect small conductivity inhomogeneities, enabling them to diagnose pathologies such as cancer by detecting tumorous tissues when other conductivity imaging techniques fail to do so.

In magnetoacoustic imaging with magnetic induction, magnetic fields are used to induce currents in the tissue. Ultrasound is generated by placing the tissue in a dynamic and static magnetic field. The dynamic field induces eddy currents and the static field leads to generation of acoustic vibration from Lorentz force on the induced currents.
The divergence of the Lorentz force acts as acoustic source of  propagating ultrasound waves that can be sensed by ultrasonic transducers placed around the tissue.   
The imaging problem is to obtain the conductivity distribution of the tissue from the acoustic source map; see \cite{bin1,bin2,bin3,bin4,bin5}. 

This paper provides a  mathematical and numerical framework for magnetoacoustic imaging with magnetic induction. We develop efficient methods for reconstructing the conductivity in the medium from the Lorentz force induced vibration. For doing so, we first estimate the electric current density in the tissue. Then we design efficient algorithms for reconstructing the heterogeneous conductivity map from the electric current density with the ultrasonic resolution.  

The paper is organized as follows. We start by describing the forward  problem.
Then we reconstruct from the acoustic measurements the divergence of the Lorentz force, which is acting as the source term in the acoustic wave equation. We recover the electric current density from the divergence of the Lorentz force, which  reduces the problem to imaging the conductivity from the internal electric current density.
We introduce three reconstruction schemes for solving the conductivity imaging problem from the internal electric current density. The first is an optimal control method. 
One of the contributions of this paper is the proof of convergence and stability of the optimal control approach provided that two magnetic excitations leading to nonparallel current densities are employed. Then we present a point fixed approach and prove that it converges to the true conductivity image. 
Finally, we propose an alternative to these iterative schemes via the use of a transport equation satisfied by the internal electric current density. Our third algorithm is direct and can be viewed as a PDE-based reconstruction scheme. 
We test numerically the three proposed schemes in the presence of measurement noise, and also quantify their stability and resolution.

The feasibility of imaging of Lorentz-force-induced motion in conductive samples was shown in \cite{mri}. The magnetoacoustic tomography with magnetic induction investigated here was 
experimentally tested in \cite{bin3,bin4}, and was
reported to produce conductivity images of quality comparable to that of ultrasound images taken under similar conditions. Other emerging hybrid techniques for conductivity imaging have also been reported in \cite{stanford, AMMARI-08, AMMARI-BONNETIER-CAPDEBOSCQ-08, kozh, handbook, GEBAUER-SCHERZER-08, sirev, seobook, otmar2}.

\section{Forward problem description}

\subsection{Time scales involved}
The forward problem in magnetoacoustic tomography with magnetic induction (MAT-MI) is  multiscale in nature. The different phenomena involved in the experiment evolve on very different time scales. Precisely, there are three typical times that appear in the mathematical model for MAT-MI.
\begin{itemize}
\item The first one is the time needed for an electromagnetic wave to propagate in the medium and is denoted by $\tau_{\text{em}}$. Typically, if the medium has a diameter of $1cm$, we have $\tau_{\text{em}}\sim 10^{-11}s$.
\item  The second characteristic time length, denoted by $\tau_{\text{pulse}}$ of the experiment is the time width of the magnetic pulse sent into the medium. Since, the time-varying magnetic field is generated by discharging a capacitor, $\tau_{\rm pulse}$ is in fact the time needed to discharge the capacitor such that  $\tau_{\text{pulse}} \sim 1\mu s$ \cite{xu2005magnetoacoustic}. 
\item The third characteristic time, $\tau_{\text{sound}}$, is the time consumed by the acoustic wave to propagate through the medium. The speed of sound is about $1.5 \cdot 10^3 m.s^{-1}$ so $\tau_{\text{sound}} \sim 6 \mu s$ for a medium of $1cm$ diameter.
\end{itemize}

\subsection{Electromagnetic model}
Let $(\ebf_i)_{i=1,2,3}$ be an orthonormal basis of $\R^3$.
Let $\Omega$ be a three-dimensional bounded $\mathcal{C}^1$ convex domain. The medium is assumed to be non magnetic, and its conductivity is given by $\sigma$ (the question of the regularity of $\sigma$ will arise later).
Assume that the medium $\Omega$ is placed in  a uniform, static magnetic field in the transverse direction $\Bbf_0=B_0 \ebf_3$.

\subsubsection{The magnetoquasistatic regime}
At time $t=0$ a second time varying magnetic field is applied in the medium. The time varying magnetic field has the form $\Bbf_1(x,t)=B_1(x)u(t)\ebf_3$. $B_1$ is assumed to be a known smooth function and $u$ is the shape of the stimulating pulse. The typical width of the pulse is about $1\mu s$ so we are in presence of a slowly varying magnetic-field. 
This regime can be described by the magnetoquasistatic equations \cite{larsson2007electromagnetics}, where the propagation of the electrical currents is considered as instantaneous, but, the induction effects are not neglected. 
These governing equations in $\Omega \times \mathbb{R}_+$ are 
\begin{equation}
\g \cdot \Bbf = 0,
\end{equation}
\begin{equation}
\g \times \Ebf = -\frac{\dr \Bbf}{\dr t},
\end{equation}
and
\begin{equation}\label{eq:divJ}
\g \cdot \Jbf = 0,
\end{equation}
where $\varepsilon_0$ is the permittivity of free space, $\Bbf$ is the total magnetic field in the medium and $\Ebf$ is the total electric field in the medium.
Ohm's law is valid and is expressed as 
\begin{equation}
\Jbf = \sigma \Ebf  \quad \mbox{in } \Omega \times \mathbb{R}_+,
\end{equation}
where $\sigma$ is the electrical conductivity of the medium. For now on, we assume that $\sigma \in L^\infty_{a,b}(\Omega) $, where 
$$L^\infty_{a,b}(\Omega):=\{f\in L^\infty(\Omega^\prime): a<f<b \mbox { in } \Omega^\prime, \quad f \equiv \sigma_0 \mbox{ in } \Omega \setminus \overline{\Omega^\prime}\}$$ with $\sigma_0, a,$ and $b$ being three given positive constants, $0<a<b$, and $\Omega^\prime \Subset \Omega$.

 As in \cite{larsson2007electromagnetics}, we use the Coulomb gauge ($\g\cdot \Abf =0$)  to express the potential representation of the fields $\Bbf$ and $\Ebf$.
The magnetic field $\Bbf$ is written as
\begin{equation} 
\Bbf = \g \times \Abf,
\end{equation} and the electric field $\Ebf$ is then of the form
\begin{equation}\label{eq:Epotential}
\Ebf = -\g \widetilde{V} - \frac{\dr \Abf}{\dr t} \quad \mbox{in } \Omega \times \mathbb{R}_+,
\end{equation}
where $\widetilde{V}$ is the electric potential. Writing $\Abf$ as follows:
$$
\Abf(x,t)= \Abf_0(x) + \Abf_1(x) u(t),$$
where $\Abf_0$ and $\Abf_1$ are assumed to be smooth.  In view of (\ref{eq:divJ}) and (\ref{eq:Epotential}), we look for $\widetilde{V}(x,t)$ of the form  $\widetilde{V}(x,t)= V(x) u^\prime(t)$ with $V$ satisfying
\begin{equation*}
\g\cdot \sigma\g V = - \g\cdot \sigma {\Abf}_1 \quad \mbox{in } \Omega \times \mathbb{R}_+.
\end{equation*}
The boundary condition on $V$ can be set as a Neumann boundary condition. Since the medium $\Omega$ is usually embedded in a non-conductive medium (air), no currents leave the medium, i.e., $\Jbf \cdot \nubf =0$ on $\partial \Omega$, where $\nubf$ is the outward normal at $\partial \Omega$. To make sure that the boundary-value problem satisfied by $V$ is well posed, we add the condition $\int_{\Omega} V=0$. We have the following boundary value problem for $V$:
\begin{equation}\label{eq:V}
\left\{\begin{aligned}
\g\cdot \sigma\g V =& - \g\cdot\sigma {\Abf}_1 \quad &\text{in } \Omega,\\
\sigma \frac{\dr V}{\dr \nu} = & - \sigma  \Abf_1\cdot \nubf \quad & \text{on } \dr \Omega,\\
\int_{\Omega} V=& 0.
\end{aligned}\right.
\end{equation}

%


\subsection{The acoustic problem}
\subsubsection{Elasticity formulation}

The eddy currents induced in the medium, combined with the magnetic field, create a Lorentz force based stress in the medium. The Lorentz force $\fbf$ is determined as 
\begin{equation}
\fbf= \Jbf \times \Bbf  \quad \mbox{in } \Omega \times \mathbb{R}_+.
\end{equation}

Since the duration and the amplitude of the stimulation are both small, we assume that we can use the linear elasticity model. The displacements inside the medium can be described by the initial boundary-value problem for the Lam\'e system of equations
\begin{equation}\label{eq:elasto}
\left\{\begin{aligned}
\rho \dr_t^2 \ubf -\g \lambda \g\cdot \ubf - \g\cdot \mu \g^s \ubf = \Jbf \times \Bbf \quad \text{in }&  \Omega \times \mathbb{R}_+, \\
\frac{\partial \ubf}{\partial n}  =0 \quad \text{on } &
 \dr \Omega \times \mathbb{R}_+,\\
 \ubf(x,0) = \frac{\partial \ubf}{\partial t}(x,0)=0 \quad \text{in } &
 \Omega,
\end{aligned}\right.
\end{equation}
where $\lambda$ and $\mu$ are the Lam\'e coefficients, 
$\rho$ is the density of the medium at rest, and $\nabla^s \ubf = (\nabla \ubf + \nabla^T \ubf)/2$ with 
the superscript $T$ being the transpose.
Here, $\partial /\partial n$ denotes the co-normal derivative defined by
$$
\frac{\partial \ubf}{\partial n} = \lambda (\nabla \cdot \ubf) \nubf + 
2 \mu  \g^s \ubf \, \nubf \quad \mbox{on } \partial \Omega, 
$$
where $\nubf$ is the outward normal at $\partial \Omega$.  The functions $\lambda$, $\mu$, and $\rho$ are assumed to be positive, smooth functions on $\overline{\Omega}$. 

The Neumann boundary condition, ${\partial \ubf}/{\partial n} = 0 $ on $\partial \Omega$,  comes from the fact that the sample is embedded in air  and can move freely at the boundary.

\subsubsection{The acoustic wave}

Under some physical assumptions, the Lam\'e system of equations (\ref{eq:elasto}) can be reduced to an acoustic wave equation. For doing so, 
we neglect the shear effects in the medium by taking $\mu =0$. The acoustic  approximation says that the dominant wave type is a compressional wave.  Equation (\ref{eq:elasto}) becomes 
 \begin{equation}\label{eq:elastoconstant}
\left\{\begin{aligned}
\rho \dr_t^2 \ubf -\g \lambda \g\cdot \ubf = \Jbf \times \Bbf \quad \text{in }&  \Omega \times \mathbb{R}_+, \\
\frac{\partial \ubf}{\partial n} = 0 \quad \text{on } & \dr \Omega \times \mathbb{R}_+, \\
\ubf(x,0) = \frac{\partial \ubf}{\partial t}(x,0)=0 \quad \text{in } &
 \Omega.
\end{aligned}\right.
\end{equation}
Introduce the pressure
$$
p= \lambda \nabla \cdot \ubf \quad \mbox{ in } \Omega \times \mathbb{R}_+.
$$
Taking the divergence of (\ref{eq:elastoconstant}) yields the acoustic wave equation
 \begin{equation}\label{eq:acoustic}
\left\{\begin{aligned}
\frac{1}{\lambda} \frac{\partial^2 p}{\partial t^2} - \nabla \cdot \frac{1}{\rho} \nabla  p = 
\g \cdot \frac{1}{\rho} \left(\Jbf \times \Bbf\right) \quad \text{in }&  \Omega \times \mathbb{R}_+,\\
 p =  0 \quad \text{on } & \dr \Omega \times \mathbb{R}_+,
 \\
 p(x,0)= \frac{\partial p}{\partial t}(x,0) = 0  \quad \text{in }&  \Omega .
\end{aligned}\right.
\end{equation}

We assume that the duration $T_{\text{pulse}}$ of the electrical pulse sent into the medium is short enough so that $p$ is the solution to
\begin{equation}\label{eq:acousticfinal}
\left\{\begin{aligned}
\frac{1}{\lambda}\frac{\partial^2 p}{\partial t^2} (x,t) - \nabla \cdot \frac{1}{\rho} \nabla  p (x,t) = 
f(x) \delta_{t=0} \quad \text{in }&  \Omega \times \mathbb{R}_+,\\
 p =  0 \quad \text{on } & \dr \Omega \times \mathbb{R}_+,
 \\
 p(x,0)= \frac{\partial p}{\partial t}(x,0) = 0  \quad \text{in }&  \Omega , 
\end{aligned}\right.
\end{equation}
where 
\begin{equation}
\label{deff}
\ds f(x)=\int_0^{T_{\text{pulse}}} \g\cdot(  \frac{1}{\rho}  \Jbf(x,t) \times \Bbf(x,t)) dt.
\end{equation}

Note that acoustic wave reflection in soft tissue by an interface
with air can be modeled well by a homogeneous Dirichlet boundary condition; see, for instance, \cite{wang}.

Let \begin{equation*}
g(x,t)= \frac{\partial p}{\partial \nu} (x,t), \quad \forall (x,t) \in  \dr \Omega \times \mathbb{R}_+.
\end{equation*}

In the next section, we aim at reconstructing the source term $f$ from the data $g$.

\section{Reconstruction of the acoustic source}
%
%

%
%
%

In this subsection, we assume that $\lambda = \lambda_0 + \delta \lambda $ and $\rho=\rho_0 + \delta \rho$, where the functions $\delta \lambda$ and $\delta \rho$ are such that $||\delta \lambda||_{L^\infty(\Omega)} \ll \lambda_0$ and 
$||\delta \rho||_{L^\infty(\Omega)} \ll \rho_0$.  We assume that $\lambda, \lambda_0, \rho,$ and $\rho_0$ are known 
and denote by $c_0= \sqrt{\frac{\lambda_0}{\rho_0}}$ the background acoustic speed. Based on the Born approximation, we image the source term $f$. 
To do so, we first consider the time-harmonic regime and define $\Gamma^\omega$ to be the outgoing fundamental solution to $\Delta + \frac{\omega^2}{c_0^2}$:
\begin{equation}\label{eq:defG0}
\left(\Delta_x + \frac{\omega^2}{c_0^2} \right) \Gamma^\omega(x,y)= \delta_y(x),
\end{equation}
subject to the Sommerfeld radiation condition:
\begin{equation*}
\vert x \vert^{\frac{1}{2}} \left( \frac{\dr}{\dr \vert x\vert} \Gamma^\omega(x,y)- i \frac{\omega}{c_0} \Gamma^\omega(x,y)\right) \longrightarrow 0, \quad \vert x \vert \rightarrow \infty.
\end{equation*}
We need the following integral operator $(\mathcal{K}_\Omega^{\omega})^\star: L^2(\partial \Omega) \rightarrow L^2(\partial \Omega)$ given by
$$
(\mathcal{K}_\Omega^{\omega})^\star[\phi](x)= \int_{\partial \Omega}
\frac{\partial \Gamma^\omega}{\partial \nu(x)}(x,y) \phi(y)\, ds(y). 
$$
Let $G_\Omega^\omega$ be the Dirichlet Green function for $\Delta +  \frac{\omega^2}{c_0^2}$ in $\Omega$, i.e., for each $y\in \Omega$, $G_\Omega^\omega(x,y)$ is the solution to 
$$
\left\{\begin{array}{l}
\left(\Delta_x + \frac{\omega^2}{c_0^2}\right) G_\Omega^\omega(x,y)= \delta_y(x), \quad 
x\in \Omega,\\
G_\Omega^\omega(x,y) = 0, \quad x \in \partial \Omega.
\end{array}
\right. $$

Let $\hat{p}$ denote the Fourier transform of the pressure $p$ and $\hat{g}$ the Fourier transform of $g$. The function $\hat{p}$ is the solution to the Helmholtz equation:
$$
\left\{\begin{aligned}
\frac{\omega^2}{\lambda(x)} \hat{p}(x,\omega) + \nabla \cdot \frac{1}{\rho(x)} \nabla \hat{p}(x,\omega) = {f}(x), \quad & x \in \Omega, \\
 \hat{p}(x,\omega) = 0, \quad & x \in \partial \Omega. 
\end{aligned}\right.
$$
Note that $f$ is a real-valued function. 

The Lippmann-Schwinger representation formula shows that
$$\begin{array}{lll}
\hat{p}(x,\omega) &=& \ds \int_\Omega (\frac{\rho_0}{\rho(y)} -1) \nabla \hat{p}(y, \omega) \cdot \nabla G_\Omega^\omega(x,y) \, dy - \omega^2 \int_\Omega (\frac{\rho_0}{\lambda(y)} - \frac{\rho_0}{\lambda_0}) \hat{p}(y, \omega) G_\Omega^\omega(x,y) \, dy \\ \nm && \ds + \rho_0 \int_\Omega {f}(y) G_\Omega^\omega(x,y) \, dy.
\end{array}$$
Using the Born approximation, we obtain
$$\begin{array}{lll}
\hat{p}(x,\omega) &\approx&\ds - \frac{1}{\rho_0} \int_\Omega \delta \rho(y)  \nabla \hat{p}_0(y, \omega) \cdot \nabla G_\Omega^\omega(x,y) \, dy + \frac{\omega^2}{c_0^2} 
\int_\Omega \frac{\delta \lambda(y)}{\lambda_0} \hat{p}_0(y, \omega) G_\Omega^\omega(x,y) \, dy \\ \nm && \ds + \rho_0 \int_\Omega {f}(y) G_\Omega^\omega(x,y) \, dy
\end{array}
$$
for $x \in \Omega$, where
$$
\hat{p}_0(x,\omega) := \rho_0 \int_\Omega {f}(y) G_\Omega^\omega(x,y) \, dy, \quad x \in \Omega. 
$$
Therefore, from the identity \cite[Eq. (11.20)]{ammarikang}
$$
(\frac{1}{2} I + (\mathcal{K}_\Omega^{\omega})^\star) \bigg[\frac{\partial G^\omega_\Omega}{\partial \nu_{\cdot}}(\cdot, y)\bigg](x) = \frac{\partial \Gamma^\omega}{\partial \nu(x)}(x, y), 
\quad x \in \partial \Omega, y \in \Omega,$$
it follows that
$$\begin{array}{lll}
\ds (\frac{1}{2} I + (\mathcal{K}_\Omega^{\omega})^\star) [\hat{g}](x,\omega)  &\approx&\ds - \frac{1}{\rho_0} \int_\Omega \delta \rho(y)  \nabla \hat{p}_0(y, \omega) \cdot \nabla \frac{\partial \Gamma^\omega(x,y)}{\partial \nu(x)} \, dy + \frac{\omega^2}{c_0^2} 
\int_\Omega \frac{\delta \lambda(y)}{\lambda_0} \hat{p}_0(y, \omega) \frac{\partial \Gamma^\omega(x,y)}{\partial \nu(x)} \, dy \\ \nm && \ds + \rho_0 \int_\Omega {f}(y) \frac{\partial \Gamma^\omega(x,y)}{\partial \nu(x)} \, dy
\end{array}
$$
for $x \in \partial \Omega$. 

Introduce 
$$
I(z, \omega) := \int_{\partial \Omega}  \bigg[
\overline{\Gamma^\omega}(x,z) 
\ds (\frac{1}{2} I + (\mathcal{K}_\Omega^{\omega})^\star) [\hat{g}](x,\omega) \, -  {\Gamma^\omega}(x,z) 
\ds \overline{(\frac{1}{2} I + (\mathcal{K}_\Omega^{\omega})^\star) [\hat{g}]}(x,\omega) \bigg] \, ds(x)
$$
for $z\in \Omega$. 

We recall the Helmholtz-Kirchhoff identity \cite[Lemma 2.32]{ammari2013mathematical}
$$
\int_{\partial \Omega}  \bigg[
\overline{\Gamma^\omega}(x,z)  \frac{\partial {\Gamma^\omega}(x,y)}{\partial \nu(x) }
-  {\Gamma^\omega}(x,z)  \overline{\frac{\partial {\Gamma^\omega}(x,y)}{\partial \nu(x) }} \bigg] \, ds(x)  = 2 i \Im m \Gamma^\omega(z,y). 
$$
We also recall that ${f}$ is real-valued and write 
${f} \approx {f}^{(0)} + \delta {f}$. Given $I(z, \omega)$ we solve the deconvolution problem 
\begin{equation} \label{decon1}
2 i \rho_0 \int_\Omega \Im m \Gamma^\omega (z,y) {f}^{(0)} (y) \, dy = I(z, \omega), 
\quad z \in \Omega, 
\end{equation}
 in order to reconstruct $f^{(0)}$ with a resolution limit determined by the Rayleigh criteria. Once $f^{(0)}$ is determined, we  solve the second deconvolution problem (\ref{decon2}) 
 \begin{equation} \label{decon2}
2 i \rho_0 \int_\Omega \Im m \Gamma^\omega (z,y) \delta {f}(y) \, dy = \delta I(z, \omega), \quad z \in \Omega, 
\end{equation}
 to find the correction $\delta f$. 
Here, 
$$\delta I(z, \omega) := \int_{\partial \Omega}  \bigg[
\overline{\Gamma^\omega}(x,z) 
\ds \delta \hat{g} (x,\omega) \, -  {\Gamma^\omega}(x,z) 
\ds \overline{\delta \hat{g}}(x,\omega) \bigg] \, ds(x)
$$
with
$$
\delta \hat{g}(x, \omega)= \frac{1}{\rho_0} \int_\Omega \delta \rho(y)  \nabla \hat{p}^{(0)}(y, \omega) \cdot \nabla \frac{\partial \Gamma^\omega(x,y)}{\partial \nu(x)} \, dy + \frac{\omega^2}{c_0^2} 
\int_\Omega \frac{\delta \lambda(y)}{\lambda_0} \hat{p}^{(0)}(y, \omega) \frac{\partial \Gamma^\omega(x,y)}{\partial \nu(x)} \, dy ,
$$
and
$$
\hat{p}^{(0)}(x,\omega) := \rho_0 \int_\Omega {f}^{(0)}(y) G_\Omega^\omega(x,y) \, dy, \quad x \in \Omega. 
$$
Since  by Fourier transform, $\hat{g}$ is known for all  $\omega \in \mathbb{R}_+$, $I(z, \omega)$ can be computed for all  $\omega \in \mathbb{R}_+$. Then from the identity
\cite[Eq. (1.35)]{ammari2013mathematical}
$$
\frac{2}{\pi} \int_{\mathbb{R}_+} \omega \Im m \Gamma^\omega(x,z) \, d\omega = - \delta_z(x),
$$
where $\delta_z$ is the Dirac mass at $z$, it follows that
$$
f^{(0)}(z) = \frac{1}{i \pi \rho_0} \int_{\mathbb{R}_+} \omega I(z,\omega)\, d\omega
\quad \mbox{and } \delta f(z) = \frac{1}{i \pi \rho_0} \int_{\mathbb{R}_+} \omega \delta I(z,\omega)\, d\omega.
$$
We refer to \cite{gang, gang2} and the references therein for source reconstruction approaches with finite set of frequencies.

\section{Reconstruction of the conductivity}
We assume that we have reconstructed the pressure source $f$ given by (\ref{deff}).
We also assume that the sample $\Omega$ is thin and hence can be assimilated to a two dimensional domain. Further, we suppose that $\Omega \subset \text{vect }(\ebf_1,\ebf_2)$. Here,  $\text{vect }(\ebf_1,\ebf_2)$ denotes the vector space spanned by $\ebf_1$  and $\ebf_2$. Recall that the magnetic fields $\Bbf_0$ and $\Bbf_1$ are parallel to 
$\ebf_3$. 
We write $\Jbf(x,t) = \Jbf(x) u^\prime(t)$. 
 In order to recover the conductivity distribution, we  start by reconstructing the vector field $\Jbf(x)$ in $\Omega$. 

\subsection{Reconstruction of the electric current density}

\subsubsection{Helmholtz decomposition}
Let $H^1(\Omega):= \{ v \in L^2(\Omega): \nabla v \in L^2(\Omega)\}$. Let $H_0^1(\Omega)$ be the set of functions in $H^1(\Omega)$ with trace zero on $\partial \Omega$ and let $H^{-1}(\Omega)$ be the dual of $H_0^1(\Omega)$. 

 We need the following two classical results.


\begin{lemm}\label{lem:regularityJ}
If $\sigma \in L_{a,b}^\infty(\Omega)$ then the solution $V$ of  (\ref{eq:V}) belongs to $H^1(\Omega)$ and hence, the electric current density $\Jbf$ belongs to $L^2(\Omega)$.
\end{lemm}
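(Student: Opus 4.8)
The plan is to establish the two assertions in sequence: first the $H^1(\Omega)$-regularity of $V$, then the $L^2(\Omega)$-membership of $\Jbf$.

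For the first part, I would set up the variational formulation of the Neumann problem (\ref{eq:V}). Since $\sigma \in L^\infty_{a,b}(\Omega)$, the bilinear form $a(v,w) = \int_\Omega \sigma \g v \cdot \g w$ is bounded and coercive on the quotient space $H^1(\Omega)/\R$ (equivalently, on the closed subspace $\{v \in H^1(\Omega): \int_\Omega v = 0\}$), the coercivity coming from the uniform lower bound $\sigma > a > 0$ together with the Poincar\'e--Wirtinger inequality. The right-hand side defines the linear functional $w \mapsto -\int_\Omega \sigma \Abf_1 \cdot \g w$, which is bounded on $H^1(\Omega)$ because $\Abf_1$ is smooth on the bounded domain $\Omega$ (hence $\sigma \Abf_1 \in L^2(\Omega)$) and it automatically encodes the Neumann boundary data $\sigma \frac{\partial V}{\partial \nu} = -\sigma \Abf_1 \cdot \nubf$ after integration by parts. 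The compatibility condition needed for solvability of a Neumann problem, namely that the functional annihilates constants, holds here since $\int_\Omega \sigma \Abf_1 \cdot \g(1) = 0$. Lax--Milgram then yields a unique solution $V \in H^1(\Omega)$ with $\int_\Omega V = 0$, and this is precisely the weak solution of (\ref{eq:V}).

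For the second part, I would simply combine this regularity with Ohm's law and the potential representation. Recall $\Jbf = \sigma \Ebf$ with $\Ebf = -\g\widetilde V - \partial_t \Abf$, and after factoring out the time dependence $\widetilde V(x,t) = V(x) u'(t)$, $\Abf(x,t) = \Abf_0(x) + \Abf_1(x)u(t)$ one gets (up to the scalar factor $u'(t)$) that the spatial current density is $\Jbf(x) = -\sigma(x)\big(\g V(x) + \Abf_1(x)\big)$. Since $\g V \in L^2(\Omega)$ from the first part, $\Abf_1 \in L^\infty(\Omega)$ by smoothness, and $\sigma \in L^\infty(\Omega)$, the product lies in $L^2(\Omega)$.

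I do not expect a serious obstacle here: this is a textbook elliptic-regularity-and-well-posedness statement, and the excerpt itself flags it as a classical result. The only points requiring a word of care are verifying the Neumann compatibility condition (which makes the divergence-form right-hand side consistent) and being careful that the natural function space is the mean-zero subspace so that coercivity via Poincar\'e--Wirtinger actually applies; both are routine. If one wanted to be thorough about the weak formulation one should note that the boundary term in (\ref{eq:V}) is only meaningful in the $H^{-1/2}(\partial\Omega)$ duality sense, but since $\sigma\Abf_1 \cdot \nubf$ has enough regularity this causes no trouble.
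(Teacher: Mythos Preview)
Your proposal is correct and is precisely the standard Lax--Milgram argument one would expect; the paper itself does not give a proof for this lemma, merely flagging it as a classical result, so there is nothing to compare against. Your handling of the mean-zero subspace, the Poincar\'e--Wirtinger coercivity, and the automatic incorporation of the Neumann data into the weak formulation is the right way to fill in the details.
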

The following Helmholtz decomposition in two dimensions holds \cite{sohr2012navier}. 
\begin{lemm}\label{lem:helmholtz2d}
If $\mathbf{f}$ is a vector field in ${L}^2(\Omega)$, then there exist two functions  $v \in H^1(\Omega)$ and ${w} \in  H^1(\Omega)$ such that
\begin{equation} \label{helmdecomp}
\mathbf{f} =\g v+ \textbf{curl } w.
\end{equation}
The differential operator  \textbf{curl} is defined by $\textbf{curl } w = (-\dr_2 w, \dr_1 w)$. Furthermore, if $\nabla \cdot \mathbf{f} \in {L}^2(\Omega)$, then the potential $v$ is a solution to 
\begin{equation} \label{eq:eqv}
\left\{\begin{aligned}
-\Delta v = \g \cdot \mathbf{f} \quad &\text{ in } \Omega,\\
\frac{\dr v}{\dr \nu} = \mathbf{f}\cdot \nubf \quad &\text{ on } \dr \Omega,
\end{aligned}\right.
\end{equation}
and $w$ is the unique solution of
\begin{equation}\label{eq:eqw}
\int_\Omega \textbf{curl } w \cdot \textbf{curl } \phi  = \int_\Omega (\mathbf{f} - \g v) \cdot \textbf{curl } \phi, \quad \forall \phi \in H(\Omega),
\end{equation} 
where $H(\Omega)=\{ \phi \in L^2(\Omega), \nabla \times \phi \in L^2(\Omega), \nabla \cdot \phi =0\}$. 
The problem can be written in strong form in $H^{-1}(\Omega)$: 
\begin{equation*}
\left\{ \begin{aligned}
-\Delta w = \text{curl }\mathbf{f}\quad &\text{ in } \Omega,\\
w = 0 \quad &\text{ on } \dr \Omega,
\end{aligned}\right. 
\end{equation*}
where the operator $\text{curl}$ is defined on vector fields by $\text{curl } \mathbf{f}
 = -\dr_2 f_1 + \dr_1 f_2.$
\end{lemm}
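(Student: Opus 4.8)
The plan is to establish the Helmholtz decomposition \eqref{helmdecomp} by the standard variational construction, then verify that the scalar potential $v$ solves the Neumann problem \eqref{eq:eqv} and that the stream function $w$ solves \eqref{eq:eqw}, and finally identify the strong $H^{-1}$ form of the equation for $w$. First I would define $v$ as the (unique up to an additive constant) weak solution of the Neumann problem
\begin{equation*}
\int_\Om \g v \cdot \g \psi = \int_\Om \fbf \cdot \g \psi, \qquad \forall \psi \in H^1(\Om),
\end{equation*}
which is solvable by Lax--Milgram on $H^1(\Om)/\R$ since the right-hand side is a bounded linear functional vanishing on constants; here no regularity on $\g\cdot\fbf$ is needed because the functional is defined directly from $\fbf\in L^2(\Om)$. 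By construction $\fbf - \g v$ is orthogonal in $L^2(\Om)$ to all gradients, so it lies in the space of divergence-free fields with vanishing normal trace; the two-dimensional fact that such a field is exactly a $\textbf{curl}$ of an $H^1$ function (on a simply connected $\cal C^1$ domain) then produces $w$, giving \eqref{helmdecomp}. When additionally $\g\cdot\fbf \in L^2(\Om)$, testing the weak formulation of $v$ and integrating by parts shows $v$ solves \eqref{eq:eqv} in the usual weak sense, with the Neumann datum $\fbf\cdot\nubf \in H^{-1/2}(\dr\Om)$ making sense by the trace theorem for $H_{\mathrm{div}}$.

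Next I would pin down $w$ via \eqref{eq:eqw}. Since $\fbf - \g v$ is $L^2$ and divergence-free with zero normal component, the identity $\fbf - \g v = \textbf{curl } w$ combined with the integration-by-parts formula $\int_\Om \textbf{curl } w \cdot \textbf{curl }\phi = \int_\Om \textbf{curl } w \cdot \textbf{curl }\phi$ over test fields $\phi \in H(\Om)$ yields \eqref{eq:eqw} immediately; uniqueness of $w$ among such $\phi$ follows because $\textbf{curl }\phi = 0$ together with $\phi \in H(\Om)$ and the boundary/connectivity hypotheses forces $\phi$ constant, hence $\textbf{curl }\phi\equiv 0$. To fix $w$ uniquely one normalizes, e.g. by the boundary condition $w = 0$ on $\dr\Om$, which is the natural one: taking the scalar curl of $\fbf = \g v + \textbf{curl } w$ gives $\operatorname{curl}\fbf = \operatorname{curl}\,\textbf{curl } w = -\Delta w$ in the distributional sense, and the vanishing normal trace of $\fbf - \g v$ translates into the tangential derivative of $w$ vanishing on $\dr\Om$, i.e. $w$ constant on the (connected) boundary, which we may take to be zero. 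This gives the strong form
\begin{equation*}
\left\{\begin{aligned}
-\Delta w &= \operatorname{curl}\fbf \quad &&\text{in } \Om,\\
w &= 0 \quad &&\text{on } \dr\Om,
\end{aligned}\right.
\end{equation*}
understood in $H^{-1}(\Om)$ since $\operatorname{curl}\fbf = -\dr_2 f_1 + \dr_1 f_2 \in H^{-1}(\Om)$ for $\fbf \in L^2(\Om)$.

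The main obstacle, such as it is, is the passage from ``divergence-free $L^2$ field with zero normal trace'' to ``$\textbf{curl}$ of an $H^1$ function'': this is where the geometric hypotheses on $\Om$ (bounded, $\cal C^1$, and implicitly simply connected so that there are no harmonic $1$-forms / nontrivial topology) are essential, and it is exactly the content of the cited reference \cite{sohr2012navier}. I would invoke that result rather than reprove it, and then the remaining verifications — Lax--Milgram solvability for $v$, the integration-by-parts identities, and the uniqueness of $w$ — are routine. One should also be slightly careful that the weak Neumann problem for $v$ is posed using only $\fbf \in L^2(\Om)$ (so that $v$ always exists), while the PDE form \eqref{eq:eqv} and the boundary identification of the datum require the extra assumption $\g\cdot\fbf \in L^2(\Om)$; the statement is phrased to reflect precisely this split.
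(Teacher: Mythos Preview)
The paper does not prove this lemma at all: it is stated as a classical result with the sentence ``The following Helmholtz decomposition in two dimensions holds \cite{sohr2012navier}'' and no further argument. Your sketch is therefore strictly more than what the paper supplies, and the variational construction you outline (Lax--Milgram for $v$ on $H^1(\Omega)/\R$, then the 2D stream-function representation of the divergence-free remainder, then distributional identification of the Dirichlet problem for $w$) is the standard route and is correct. Since you yourself invoke \cite{sohr2012navier} for the one nontrivial step, your proposal and the paper ultimately rest on the same source; the difference is only that you spell out the surrounding routine verifications while the paper leaves the entire lemma to the citation.
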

We apply the Helmholtz decomposition (\ref{helmdecomp}) to the vector field $\Jbf \in {L}^2(\Omega)$ and get the following proposition.
\begin{prop}\label{prop:Jdecomposition}
 There exists a function $w \in H$ such that
\begin{equation}\label{eq:decompJ}
\Jbf= \textbf{curl } w ,
\end{equation} and $w$ is the unique solution of 
\begin{equation}\label{eq:eqwstrong}
\left\{ \begin{aligned}
-\Delta w = \text{curl }\Jbf\quad &\text{ in } \Omega,\\
w = 0 \quad &\text{ on } \dr \Omega.
\end{aligned}\right.
\end{equation}
\end{prop}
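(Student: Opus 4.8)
The plan is to apply the Helmholtz decomposition of Lemma~\ref{lem:helmholtz2d} to $\Jbf$, which belongs to $L^2(\Omega)$ by Lemma~\ref{lem:regularityJ}, and then to show that the gradient part of the decomposition vanishes, using the two structural properties of the current: $\g\cdot\Jbf=0$ in $\Omega$ (equation (\ref{eq:divJ})), and $\Jbf\cdot\nubf=0$ on $\dr\Omega$, the latter being exactly the Neumann condition in (\ref{eq:V}) rewritten through Ohm's law and (\ref{eq:Epotential}) (recall $\Jbf=-\sigma(\g V+\Abf_1)$ after factoring out $u'(t)$).

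First I would write $\Jbf=\g v+\textbf{curl } w$ with $v,w\in H^1(\Omega)$ as in (\ref{helmdecomp}). Since $\g\cdot\Jbf\in L^2(\Omega)$ (it is in fact identically zero), Lemma~\ref{lem:helmholtz2d} says that $v$ solves the Neumann problem (\ref{eq:eqv}) with right-hand side $\g\cdot\Jbf=0$ and boundary data $\Jbf\cdot\nubf$. The step to verify here is that $\Jbf\cdot\nubf=0$: because $\Jbf\in L^2(\Omega)$ with $\g\cdot\Jbf\in L^2(\Omega)$, its normal trace is well defined in $H^{-1/2}(\dr\Omega)$, and it coincides with $-\sigma\big(\dr V/\dr\nu+\Abf_1\cdot\nubf\big)$, which vanishes by the boundary condition in (\ref{eq:V}). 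Hence $v$ is harmonic with zero normal derivative, so $v$ is constant and $\g v=0$, which yields (\ref{eq:decompJ}).

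Next, I would substitute $\Jbf=\textbf{curl } w$ into the last identity of Lemma~\ref{lem:helmholtz2d} (or into its strong form) to get $-\Delta w=\text{curl }(\textbf{curl } w)=\text{curl }\Jbf$ in $H^{-1}(\Omega)$, the condition $w=0$ on $\dr\Omega$ being part of the decomposition; together these are precisely (\ref{eq:eqwstrong}). Uniqueness is immediate: two solutions $w_1,w_2$ differ by an element of $H^1_0(\Omega)$ that is harmonic in $H^{-1}(\Omega)$, hence zero; this is also the uniqueness of the $w$-component already asserted in Lemma~\ref{lem:helmholtz2d}.

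The only genuinely delicate point I anticipate is the rigorous justification that the normal trace $\Jbf\cdot\nubf$ vanishes in $H^{-1/2}(\dr\Omega)$, i.e.\ that the (formally derived) Neumann condition in (\ref{eq:V}) is exactly the boundary datum produced by the Helmholtz decomposition for the low-regularity field $\Jbf$. Once this compatibility is in place, the rest is a direct bookkeeping of the two preparatory lemmas.
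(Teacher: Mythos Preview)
Your proposal is correct and follows essentially the same route as the paper: apply the Helmholtz decomposition of Lemma~\ref{lem:helmholtz2d} to $\Jbf$, use $\g\cdot\Jbf=0$ and $\Jbf\cdot\nubf=0$ to force the potential $v$ in (\ref{eq:eqv}) to be constant, and read off (\ref{eq:eqwstrong}) from the strong form in the lemma. If anything, you are more careful than the paper about the trace issue for $\Jbf\cdot\nubf$ and about spelling out uniqueness.
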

Recall (\ref{eq:divJ}):
\begin{equation*}
\g \cdot  \Jbf = 0,
\end{equation*} together with the fact that no current leaves the medium
\begin{equation*}
\Jbf \cdot \nubf = 0 \quad \text{ on }\dr \Omega.
\end{equation*} 
Since $v$ is a solution to (\ref{eq:V}), 
$v$ has to be constant. So, in order to reconstruct $\Jbf$ one just needs to reconstruct $w$.

\subsubsection{Recovery of $\Jbf$}

Under the assumption $|\Bbf_1| \ll |\Bbf_0|$ in $\Omega \times \mathbb{R}_+$ and 
$|\delta \rho| \ll \rho_0$ in $\Omega$,
the pressure source term $f$ defined by (\ref{deff}) can 
be approximated as follows:
$$
f(x)\approx \frac{1}{\rho_0}  
\g\cdot(  \Jbf(x) \times \Bbf_0) (u(T_{\text{pulse}}) - u(0)),
$$
where we have used that $\Jbf(x,t) = \Jbf(x) u^\prime(t)$.  

Since $\Bbf_0$ is constant  we get
\begin{equation*}
\g\cdot(  \Jbf(x) \times \Bbf_0)= \left(\g \times \Jbf\right) \cdot \Bbf_0=\vert \Bbf_0 \vert \text{curl }\Jbf.
\end{equation*}
Now, since $\Bbf_0$ is known, we can compute $w$ as the unique solution of 
\begin{equation}\label{eq:defw}
\left\{ \begin{aligned}
-\Delta w =\frac{\rho_0 f}{\vert \Bbf_0\vert (u(T_{\text{pulse}}) - u(0))}  \quad &\text{ in } \Omega,\\
w = 0 \quad &\text{ on } \dr \Omega,
\end{aligned}\right.
\end{equation} and then, by Proposition \ref{prop:Jdecomposition}, compute $\Jbf$ by $\Jbf = \textbf{curl } w$.

Note that since the problem is reduced to the two dimensional case, $\Jbf$ is then contained in the plane $\Bbf_0^\top$ with $\top$ denoting the orthogonal.


%

\subsection{Recovery of the conductivity from internal electric current density}
In this subsection we denote by $\sigma_\star$ the true conductivity of the medium, and we assume that $\sigma_\star \in L^\infty_{a,b} (\Omega)$ with $0<a<b$, i.e., it is bounded from below and above by positive known constants and is equal to some given positive constant $\sigma_0$ in a neighborhood  of $\partial \Omega$.

\subsubsection{Optimal control method}\label{sec:optimal}

Recall that $\Abf_1$ is defined by $\nabla \cdot \Abf_1 =0, B_1(x) \mathbf{e}_3 = \nabla \times \Abf_1(x)$.  Define the following operator $\mathcal{F}$:
\begin{equation*}
\begin{aligned}
L^\infty_{a,b}(\Omega) &\longrightarrow H^1(\Omega) \\
\sigma &\longmapsto  \mathcal{F}[\sigma] 
\end{aligned}
\end{equation*}
 with 
 \begin{equation} \label{defv}
 \mathcal{F}[\sigma] := U \left\{ \begin{aligned}
 \g\cdot \sigma\g U =& - \g\cdot\sigma {\Abf_1} \quad &\text{in } \Omega,\\
\sigma \frac{\dr U}{\dr \nu} = & - \sigma  \Abf_1\cdot \nubf \quad & \text{on } \dr \Omega,\\
\int_{\Omega} U=& 0.
 \end{aligned}\right.
 \end{equation}
 The following lemma holds.
 \begin{lemm}
 The operator $\mathcal{F}$ is Fr\'echet differentiable. For any $\sigma \in L^\infty_{a,b} (\Omega)$ and $h$  such that $\sigma+h \in L^\infty_{a,b} (\Omega)$, we have
 \begin{equation}\label{eq:defdF}
  d\mathcal{F}[\sigma](h) := q\left\{ \begin{aligned}
 \g\cdot \sigma\g q =& - \g\cdot h {\Abf_1}- \g \cdot h \g\mathcal{F}[\sigma] \quad &\text{in } \Omega,\\
\sigma \frac{\dr q}{\dr \nu} = &0\quad & \text{on } \dr \Omega,\\
\int_{\Omega} q=& 0.
 \end{aligned}\right.
 \end{equation}
 \end{lemm}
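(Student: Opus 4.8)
Here is the proof I would give.

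The plan is to realize $U:=\mathcal{F}[\sigma]$ (solution of (\ref{defv})) and the candidate derivative $q$ (solution of (\ref{eq:defdF})) as the unique zero–mean solutions of coercive Neumann problems, to prove that $\mathcal{F}$ is Lipschitz from $L^\infty_{a,b}(\Omega)$ into $H^1(\Omega)$, and then to control the second–order Taylor remainder $r:=\mathcal{F}[\sigma+h]-\mathcal{F}[\sigma]-q$ by a single energy estimate, showing $\n{r}{H^1(\Omega)}=O(\n{h}{L^\infty(\Omega)}^2)$.

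First I would record well–posedness. For $\sigma\in L^\infty_{a,b}(\Omega)$ the bilinear form $(u,\phi)\mapsto\int_\Omega\sigma\,\g u\cdot\g\phi$ is bounded and coercive on the closed subspace $\{v\in H^1(\Omega):\int_\Omega v=0\}$, by $a<\sigma<b$ together with the Poincar\'e--Wirtinger inequality, while $\phi\mapsto-\int_\Omega\sigma\,\Abf_1\cdot\g\phi$ is a bounded linear functional vanishing on constants; Lax--Milgram yields a unique $U=\mathcal{F}[\sigma]$ with $\n{U}{H^1(\Omega)}\le C(a,b,\Omega)\n{\Abf_1}{L^2(\Omega)}$, and its weak form is
\[
\int_\Omega\sigma\,\g U\cdot\g\phi=-\int_\Omega\sigma\,\Abf_1\cdot\g\phi\qquad\text{for all }\phi\in H^1(\Omega).
\]
The same argument produces $q$: note that by the very definition of $L^\infty_{a,b}(\Omega)$ one has $h=(\sigma+h)-\sigma\equiv 0$ on a neighbourhood of $\partial\Omega$, so $h(\Abf_1+\g U)\in L^2(\Omega)$ has vanishing normal trace, the compatibility condition for the Neumann problem (\ref{eq:defdF}) holds, the homogeneous co-normal condition $\sigma\,\frac{\dr q}{\dr\nu}=0$ is the natural one, and
\[
\int_\Omega\sigma\,\g q\cdot\g\phi=-\int_\Omega h\,(\Abf_1+\g U)\cdot\g\phi\qquad\text{for all }\phi\in H^1(\Omega).
\]
Taking $\phi=q$ and using coercivity shows that $h\mapsto q$ is a bounded linear operator $L^\infty_{a,b}(\Omega)\to H^1(\Omega)$, which is the linearity and continuity part of the assertion.

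Next I would prove that $\mathcal{F}$ is Lipschitz. Writing $U_h:=\mathcal{F}[\sigma+h]$, comparing the weak equation for $U_h$ (with coefficient $\sigma+h$) with that for $U$ and regrouping gives
\[
\int_\Omega(\sigma+h)\,\g(U_h-U)\cdot\g\phi=-\int_\Omega h\,(\Abf_1+\g U)\cdot\g\phi\qquad\text{for all }\phi\in H^1(\Omega),
\]
and $\phi=U_h-U$ together with $\sigma+h\ge a$ and the a priori bound on $U$ yields $\n{U_h-U}{H^1(\Omega)}\le C\,\n{h}{L^\infty(\Omega)}$ with $C=C(a,b,\Omega,\Abf_1)$. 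Finally, expanding $\int_\Omega\sigma\,\g U_h\cdot\g\phi=\int_\Omega(\sigma+h)\g U_h\cdot\g\phi-\int_\Omega h\,\g U_h\cdot\g\phi$ and then subtracting the weak equations for $U$ and for $q$, the $\Abf_1$-contributions of the three equations cancel exactly and one is left with
\[
\int_\Omega\sigma\,\g r\cdot\g\phi=-\int_\Omega h\,\g(U_h-U)\cdot\g\phi\qquad\text{for all }\phi\in H^1(\Omega).
\]
Choosing $\phi=r$, using coercivity, Cauchy--Schwarz and the Lipschitz bound gives $a\n{\g r}{L^2(\Omega)}\le C\n{h}{L^\infty(\Omega)}^2$, hence $\n{r}{H^1(\Omega)}\le C\n{h}{L^\infty(\Omega)}^2=o(\n{h}{L^\infty(\Omega)})$ by Poincar\'e--Wirtinger (recall $r$ has zero mean). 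This is exactly Fr\'echet differentiability of $\mathcal{F}$ at $\sigma$ with derivative $h\mapsto q=d\mathcal{F}[\sigma](h)$.

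The only genuinely delicate point is the bookkeeping leading to the remainder identity: one must split off the term $\int_\Omega h\,\g U_h\cdot\g\phi$ \emph{before} inserting the equation satisfied by $U_h$, so that the terms linear in $h$ coming from the three equations annihilate each other and the right-hand side reduces to the quadratically small quantity $-\int_\Omega h\,\g(U_h-U)\cdot\g\phi$ rather than something only linear in $h$; and one must use that $h$ is supported compactly in $\Omega$, which is what makes the Neumann problem (\ref{eq:defdF}) for $q$ well posed with the stated homogeneous co-normal boundary condition and consistent with the normalization $\int_\Omega q=0$. Everything else is a routine application of Lax--Milgram, uniform ellipticity and Poincar\'e's inequality.
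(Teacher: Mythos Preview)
Your proof is correct and follows essentially the same line as the paper's: both identify the equation satisfied by the remainder $r=\mathcal{F}[\sigma+h]-\mathcal{F}[\sigma]-q$, whose source term is $h\,\g(\mathcal{F}[\sigma+h]-\mathcal{F}[\sigma])$, bound this via a Lipschitz estimate on $\mathcal{F}$, and conclude by an energy estimate giving $\n{r}{H^1(\Omega)}\le C\n{h}{L^\infty(\Omega)}^2$. Your version works in weak form throughout and is more explicit about well-posedness (Lax--Milgram, Poincar\'e--Wirtinger, compact support of $h$), but the structure is the same.
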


 \begin{proof} Denote by $r$ the function $\mathcal{F}[\sigma +h] - \mathcal{F}[\sigma] - q$. The function $r$ belongs to $H^1(\Omega)$ and satisfies the following equation in $\Omega$:
 \begin{equation*}
 \g \cdot \sigma \g r =\g \cdot h\g\left( \mathcal{F}[\sigma] - \mathcal{F}[\sigma +h]\right),
 \end{equation*}
 together with the boundary condition 
 \begin{equation*}
 \frac{\dr r}{\dr \nu} =0 \quad \text{on } \dr \Omega,
 \end{equation*}  and the zero mean condition $\int_\Omega r =0$. We have the following estimate: 
 \begin{equation*}
 \Vert \g r \Vert_{L^2(\Omega)} \leq \frac{1}{a} \Vert h \Vert_{L^\infty(\Omega)} \Vert\g\left( \mathcal{F}[\sigma] - \mathcal{F}[\sigma +h]\right)\Vert_{L^2(\Omega)}.
 \end{equation*}
 Since  $ \mathcal{F}[\sigma] - \mathcal{F}[\sigma +h]$ satisfies \begin{equation*}
 \g \cdot \left(\sigma \g\left( \mathcal{F}[\sigma] - \mathcal{F}[\sigma +h]\right)\right)= -\g \cdot \left(h \g \mathcal{F}[\sigma+h]\right) + \g \cdot\left( h { \Abf_1} \right) 
 \end{equation*} with the boundary condition
 \begin{equation*}
 \frac{\dr}{\dr \nu} \left(\mathcal{F}[\sigma+h] -\mathcal{F}[\sigma]\right)=0, 
 \end{equation*} and the zero mean condition  $\int_{\Omega} \left(\mathcal{F}[\sigma+h] -\mathcal{F}[\sigma]\right)=0$. We can also estimate the $L^2$-norm 
 of $\g \left(\mathcal{F}[\sigma+h] -\mathcal{F}[\sigma]\right)$ as follows:
 \begin{equation*}
 \Vert \g \left(\mathcal{F}[\sigma+h] -\mathcal{F}[\sigma]\right) \Vert_{L^2(\Omega)} \leq \frac{1}{a} \Vert h \Vert_{L^\infty(\Omega)} \left(\Vert \g \mathcal{F}[\sigma+h] \Vert_{L^2(\Omega)} + \Vert {\Abf}_1\Vert_{L^2(\Omega)}\right).
 \end{equation*}
 Therefore, we can bound the $H^1$-norm of $\mathcal{F}[\sigma +h]$ independently of $\sigma$ and $h$ for $||h||_{L^\infty}$ small enough. There exists a constant $C$, depending only on $\Omega$, $a$, $b$, and ${\Abf}_1$, such that
 \begin{equation*}
\Vert \g \mathcal{F}[\sigma+h] \Vert_{L^2(\Omega)} \leq C.
 \end{equation*}
 Hence, we get
 \begin{equation*}
 \Vert \g \left(\mathcal{F}[\sigma+h] -\mathcal{F}[\sigma]\right) \Vert_{L^2(\Omega)} \leq \frac{1}{a} \Vert h \Vert_{L^\infty(\Omega)}\left(C+\Vert { \Abf}_1 \Vert_{L^2(\Omega)}\right),
 \end{equation*}
 and therefore, 
 \begin{equation*}
 \Vert \g r \Vert_{L^2(\Omega)} \leq \widetilde{C} \Vert h \Vert_{L^\infty (\Omega)}^2,
 \end{equation*}
 which shows the Fr\'echet differentiability of $\mathcal{F}$.  
 \end{proof}
 
 Now, we introduce the misfit functional:
 
 \begin{equation}\label{eq:defj}
 \begin{aligned}
L^\infty_{a,b} &\longrightarrow \mathbb{R} \\
\sigma &\longmapsto \mathcal{J}[\sigma]= \frac{1}{2}\int_\Omega \vert \sigma \left(\g \mathcal{F}[\sigma] + { \Abf}_1\right) - \Jbf \vert^2,
\end{aligned}
 \end{equation}
 \begin{lemm}\label{lem:difJ}
 The misfit functional $\mathcal{J}$ is Fr\'echet-differentiable. For any $\sigma\in L^\infty_{a,b}(\Omega)$, we have
 \begin{equation*}
 d\mathcal{J}[\sigma]=\left(\sigma \g \mathcal{F}[\sigma] + \sigma {\Abf}_1- \Jbf  \right)\cdot \left( \g \mathcal{F}[\sigma]  + {\Abf}_1 \right) +   \g s \cdot \left({\Abf}_1 + \g \mathcal{F}[\sigma] \right),
 \end{equation*}
 where $s$ is defined as the solution to the adjoint problem:
  \begin{equation}\label{eq:defs}
\left\{ \begin{aligned}
 \g\cdot \sigma\g s=& \g\cdot \left(\sigma^2 \g \mathcal{F}[\sigma] + \sigma^2 {\Abf}_1- \sigma\Jbf\right) \quad &\text{in } \Omega,\\
\sigma \frac{\dr s}{\dr \nu} = &0\quad & \text{on } \dr \Omega,\\
\int_{\Omega} s=& 0.
 \end{aligned}\right.
 \end{equation}
 
 \end{lemm}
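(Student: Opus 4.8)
The plan is to write $\mathcal{J}$ as the composition of the map
$\Phi\colon \sigma \mapsto \sigma(\nabla\mathcal{F}[\sigma]+\Abf_1)-\Jbf$, regarded as a map from $L^\infty_{a,b}(\Omega)$ into $L^2(\Omega)$, with the smooth functional $u\mapsto \frac12\|u\|_{L^2(\Omega)}^2$, and then to remove the implicit dependence on $d\mathcal{F}$ by introducing the adjoint state $s$. Throughout, the identity for $d\mathcal{J}[\sigma]$ is understood as a density, i.e. $d\mathcal{J}[\sigma](h)=\int_\Omega d\mathcal{J}[\sigma]\,h$.

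First I would establish the Fr\'echet differentiability of $\Phi$. Writing $q=d\mathcal{F}[\sigma](h)$ and $r=\mathcal{F}[\sigma+h]-\mathcal{F}[\sigma]-q$, the product rule gives
$$\Phi[\sigma+h]-\Phi[\sigma]-\big(h(\nabla\mathcal{F}[\sigma]+\Abf_1)+\sigma\nabla q\big)=\sigma\nabla r+h\,\nabla\big(\mathcal{F}[\sigma+h]-\mathcal{F}[\sigma]\big).$$
The first term on the right is $O(\|h\|_{L^\infty(\Omega)}^2)$ since $\|\nabla r\|_{L^2(\Omega)}\le\widetilde C\|h\|_{L^\infty(\Omega)}^2$ (shown in the proof of the previous lemma) and $\sigma$ is bounded, and the second is likewise $O(\|h\|_{L^\infty(\Omega)}^2)$ because $\|\nabla(\mathcal{F}[\sigma+h]-\mathcal{F}[\sigma])\|_{L^2(\Omega)}\le\frac1a\|h\|_{L^\infty(\Omega)}(C+\|\Abf_1\|_{L^2(\Omega)})$, again from the previous lemma. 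Since $h\mapsto q$ is linear with $\|\nabla q\|_{L^2(\Omega)}\lesssim\|h\|_{L^\infty(\Omega)}$, the map $d\Phi[\sigma](h)=h(\nabla\mathcal{F}[\sigma]+\Abf_1)+\sigma\nabla q$ is bounded linear from $L^\infty(\Omega)$ to $L^2(\Omega)$; hence $\Phi$ is Fr\'echet differentiable, and the chain rule yields
$$d\mathcal{J}[\sigma](h)=\int_\Omega\big(\sigma(\nabla\mathcal{F}[\sigma]+\Abf_1)-\Jbf\big)\cdot\big(h(\nabla\mathcal{F}[\sigma]+\Abf_1)+\sigma\nabla q\big).$$

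Next I would eliminate the term involving $q$. From $\sigma\,\partial_\nu\mathcal{F}[\sigma]=-\sigma\Abf_1\cdot\nubf$ on $\partial\Omega$ we get $(\nabla\mathcal{F}[\sigma]+\Abf_1)\cdot\nubf=0$ on $\partial\Omega$, so (using $\Jbf\cdot\nubf=0$) the vector field $\sigma\big(\sigma(\nabla\mathcal{F}[\sigma]+\Abf_1)-\Jbf\big)$ has vanishing normal component; this is exactly the compatibility condition making the Neumann problem (\ref{eq:defs}) for $s$ well posed, with a unique zero-mean solution in $H^1(\Omega)$ by Lax--Milgram. Testing the weak form of (\ref{eq:defs}) against $q$ gives $\int_\Omega\sigma\nabla s\cdot\nabla q=\int_\Omega\sigma\big(\sigma(\nabla\mathcal{F}[\sigma]+\Abf_1)-\Jbf\big)\cdot\nabla q$, while testing the weak form of (\ref{eq:defdF}) against $s$ gives $\int_\Omega\sigma\nabla q\cdot\nabla s=\int_\Omega h\,(\Abf_1+\nabla\mathcal{F}[\sigma])\cdot\nabla s$ (the same boundary identity provides the compatibility of (\ref{eq:defdF})). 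Combining the two identities replaces $\int_\Omega\big(\sigma(\nabla\mathcal{F}[\sigma]+\Abf_1)-\Jbf\big)\cdot\sigma\nabla q$ by $\int_\Omega h\,\nabla s\cdot(\Abf_1+\nabla\mathcal{F}[\sigma])$, and collecting terms produces precisely the stated formula for $d\mathcal{J}[\sigma]$.

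The one point that genuinely needs care is this adjoint argument: one must check that the right-hand sides of both (\ref{eq:defdF}) and (\ref{eq:defs}) satisfy the solvability condition for the pure Neumann problem and that $q$ and $s$ are admissible test functions in each other's weak formulation; the boundary identity $(\Abf_1+\nabla\mathcal{F}[\sigma])\cdot\nubf=0$ on $\partial\Omega$ is what makes both of these work. The differentiability part is essentially bookkeeping built on top of the estimates already contained in the proof of the previous lemma.
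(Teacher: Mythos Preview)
Your proof is correct and follows essentially the same approach as the paper: compute $d\mathcal{J}[\sigma](h)$ via the chain rule to get the expression involving $\sigma\nabla q$, then eliminate $q=d\mathcal{F}[\sigma](h)$ by testing the adjoint problem (\ref{eq:defs}) against $q$ and the linearized problem (\ref{eq:defdF}) against $s$. You are more careful than the paper in justifying the Fr\'echet differentiability of the intermediate map $\Phi$ and in checking the Neumann compatibility conditions, but the core argument is identical.
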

 
 \begin{proof} Since $\mathcal{F}$ is Fr\'echet-differentiable, so is $\mathcal{J}$. For any $\sigma \in L^\infty_{a,b}(\Omega)$ and $h$ such that $\sigma +h \in L^\infty_{a,b}(\Omega)$, we have
 \begin{equation*}
 d\mathcal{J}[\sigma](h)=\int_\Omega\left(\sigma \g \mathcal{F}[\sigma] + \sigma {\Abf}_1 - \Jbf\right)\cdot \left(h\g( \mathcal{F}[\sigma] + {\Abf}_1 )+\sigma \g (d\mathcal{F}[\sigma](h)) \right).
 \end{equation*}
 Multiplying (\ref{eq:defs}) by $d\mathcal{F}[\sigma](h)$ we get
 \begin{equation*}
 \int_\Omega  \sigma \left(\sigma \g \mathcal{F}[\sigma] + \sigma {\Abf}_1 - \Jbf\right) \cdot \g d\mathcal{F}[\sigma](h)= \int_\Omega \sigma \g s \cdot \g d\mathcal{F}[\sigma](h).
 \end{equation*}
 On the other hand, multiplying (\ref{eq:defdF}) by $s$ we obtain
  \begin{equation*}
 \int_\Omega \sigma \g s \cdot \g d\mathcal{F}[\sigma](h)= \int_\Omega  h \g s \cdot \left({\Abf}_1 + \g \mathcal{F}[\sigma] \right) .
 \end{equation*}
 So we have
 \begin{equation*}
  d\mathcal{J}[\sigma](h)=\int_\Omega h\bigg[\left(\sigma \g \mathcal{F}[\sigma] + \sigma {\Abf}_1 - \Jbf  \right)\cdot \left( \g \mathcal{F}[\sigma]  + {\Abf}_1 \right) +  \g s \cdot \left({\Abf}_1 + \g \mathcal{F}[\sigma] \right) \bigg],
 \end{equation*}
and the proof is complete. 
 \end{proof}
 
Lemma  \ref{lem:difJ} allows us to apply the gradient descent method in order to minimize the discrepancy functional
$\mathcal{J}$. Let $\sigma_{(0)}$ be an initial guess. We compute the iterates
\begin{equation} \label{defeta}
\sigma_{(n+1)} = T[\sigma_{(n)}] - \mu d\mathcal{J}[ T[\sigma_{(n)}]], 
\quad \forall n\in \mathbb{N},
\end{equation}
where $\mu >0$ is the step size and
$T[f]= \min \{ \max \{ f, a \}, b\}$.

In the sequel, we prove the convergence of (\ref{defeta}) with two excitations. 
Let $\Jbf^{(1)}$ and $\Jbf^{(2)}$ correspond to two different excitations $\Abf_1^{(1)}$ and 
$\Abf_1^{(2)}$. Assume that $\Jbf^{(1)} \times \Jbf^{(2)} \neq 0$ in $\Omega$. Let $\mathcal{G}^{(i)}: \sigma \mapsto \sigma \g\left( \mathcal{F}^{(i)}[\sigma] + { \Abf}^{(i)}_1\right) - \Jbf_i$, where $\mathcal{F}^{(i)}$ is defined by (\ref{eq:defdF}) with 
$\Abf_1= \Abf_1^{(i)}$ for $i=1,2$. The optimal control algorithm (\ref{defeta}) with two excitations is equivalent to the following Landweber scheme given by
\begin{equation} \label{defeta2}
\sigma_{(n+1)} = T[\sigma_{(n)}] - \mu d\mathcal{G}^\star[ \mathcal{G}[T[\sigma_{(n)}]]], 
\quad \forall n\in \mathbb{N},
\end{equation}
where $\mathcal{G}[\sigma] = (\mathcal{G}^{(1)}[\sigma], \mathcal{G}^{(2)}[\sigma])^T$.

Following \cite{laure2}, we prove the convergence and stability of (\ref{defeta2}) provided that two magnetic excitations leading to nonparallel current densities are employed. 

\begin{prop} Let $\Jbf^{(1)}$ and $\Jbf^{(2)}$ correspond to two different excitations. Assume that $\Jbf^{(1)} \times \Jbf^{(2)} \neq 0$ in $\Omega$. Then there exists $\eta >0$ such that if $|| \sigma_{(0)} - \sigma_\star||_{H^1_0(\Omega)} \leq \eta$, then $|| \sigma_{(n)} - \sigma_\star||_{H^1_0(\Omega)} \rightarrow 0$ as $n\rightarrow + \infty$. 

\end{prop}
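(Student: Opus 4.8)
The plan is to read \eqref{defeta2} as a projected gradient (Landweber-type) iteration for the nonlinear operator $\mathcal{G}=(\mathcal{G}^{(1)},\mathcal{G}^{(2)})$, whose zero is $\sigma_\star$, and to verify the three ingredients that force such an iteration to converge to that particular zero: (i) $\mathcal{G}$ is Fr\'echet differentiable with locally Lipschitz derivative on an $H^1_0$-ball around $\sigma_\star$, after rescaling $\mu$ so that $\|d\mathcal{G}[\sigma]\|\le 1$ there; (ii) a tangential-cone-type estimate holds in that ball; and (iii) $d\mathcal{G}[\sigma_\star]$ is injective on the space of admissible perturbations. Ingredient (i) is contained in the differentiability lemma for $\mathcal{F}$ and the elliptic estimates in its proof, which also yield the Lipschitz dependence of $d\mathcal{F}[\sigma]$ on $\sigma$. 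Ingredients (ii) and (iii) will both follow from a single stability estimate for the linearization, and it is there that the hypothesis $\Jbf^{(1)}\times\Jbf^{(2)}\neq 0$ is used.

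For the stability estimate, set $\Ebf_\star^{(i)}=\g\mathcal{F}^{(i)}[\sigma_\star]+\Abf_1^{(i)}$, so that $\Jbf^{(i)}=\sigma_\star\,\Ebf_\star^{(i)}$ and, by hypothesis, $\Ebf_\star^{(1)}(x)$ and $\Ebf_\star^{(2)}(x)$ form a basis of $\R^2$ for every $x\in\Omega$. Each admissible perturbation $h$ vanishes near $\dr\Omega$ (all admissible conductivities equal $\sigma_0$ there), and $d\mathcal{G}^{(i)}[\sigma_\star](h)=h\,\Ebf_\star^{(i)}+\sigma_\star\g q^{(i)}$ with $q^{(i)}$ solving \eqref{eq:defdF}. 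Since $\frac1{\sigma_\star}\bigl(d\mathcal{G}^{(i)}[\sigma_\star](h)-h\,\Ebf_\star^{(i)}\bigr)=\g q^{(i)}$ is curl-free, taking the two-dimensional curl produces, for $i=1,2$, a first-order relation $\Ebf_\star^{(i)}\cdot\g^{\perp}(h/\sigma_\star)=\mathrm{curl}\bigl(\tfrac1{\sigma_\star}d\mathcal{G}^{(i)}[\sigma_\star](h)\bigr)-\tfrac{h}{\sigma_\star}\,\mathrm{curl}\,\Ebf_\star^{(i)}$, the last term being of order zero in $h$ since $\mathrm{curl}\,\Ebf_\star^{(i)}=B_1^{(i)}$ is known. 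Inverting the pointwise $2\times2$ system --- invertible precisely because $\Ebf_\star^{(1)}\not\parallel\Ebf_\star^{(2)}$ --- expresses $\g(h/\sigma_\star)$ through $d\mathcal{G}[\sigma_\star](h)$ and a zeroth-order term in $h$; a Gr\"onwall argument along the associated flow, started from the region where $h\equiv 0$, then gives a bound $\|h\|_{H^1_0(\Omega)}\le C\,\|d\mathcal{G}[\sigma_\star](h)\|_{*}$ in a suitable norm $\|\cdot\|_*$ on the residual. This yields (iii) at once; running the same computation on a difference $\sigma_1-\sigma_2$ of conductivities close to $\sigma_\star$ (so that the corresponding electric fields stay close to $\Ebf_\star^{(i)}$, hence still nonparallel) upgrades the estimate to the tangential cone condition, with constant made smaller than $1/2$ by shrinking the neighborhood.

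With (i)--(iii) in hand, set $e_n=\sigma_{(n)}-\sigma_\star$ and $\tilde e_n=T[\sigma_{(n)}]-\sigma_\star$. Using $\mathcal{G}[\sigma_\star]=0$, $T[\sigma_\star]=\sigma_\star$ and the non-expansiveness of the truncation $T$, a Taylor expansion of the iteration gives $e_{n+1}=(\mathrm{Id}-\mu\,d\mathcal{G}[\sigma_\star]^{\star}d\mathcal{G}[\sigma_\star])\,\tilde e_n+\mu R_n$, where $R_n$ collects the Lipschitz variation of $d\mathcal{G}^{\star}$ and the tangential-cone remainder and obeys $\|R_n\|_{H^1_0}\le C\|\tilde e_n\|_{H^1_0}^2$. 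The stability estimate makes $d\mathcal{G}[\sigma_\star]^{\star}d\mathcal{G}[\sigma_\star]$ coercive on admissible perturbations, so $\mathrm{Id}-\mu\,d\mathcal{G}[\sigma_\star]^{\star}d\mathcal{G}[\sigma_\star]$ contracts with factor $1-\gamma\mu$ for $\mu$ small, and hence $\|e_{n+1}\|_{H^1_0}\le(1-\gamma\mu)\|e_n\|_{H^1_0}+C\mu\|e_n\|_{H^1_0}^2$. Taking $\eta\le\gamma/(2C)$ keeps the iterates inside the good ball and yields $\|e_n\|_{H^1_0}\le(1-\tfrac{\gamma\mu}{2})^{n}\eta\to 0$, which is the assertion. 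The main obstacle is the stability estimate of the second paragraph: the pointwise linear-algebra inversion only controls $L^2$-type/pointwise quantities, so carrying it up to the $H^1_0$ norm of $h$ requires care about how many derivatives of the residual $d\mathcal{G}[\sigma_\star](h)$ are consumed --- equivalently, identifying the norm $\|\cdot\|_*$ in which the Landweber residual actually decays --- and coping with the limited elliptic regularity of solutions of \eqref{eq:V} for merely bounded $\sigma$; a secondary point is justifying that $T$ does not increase the $H^1_0$-distance to $\sigma_\star$. This is the argument carried out following \cite{laure2}.
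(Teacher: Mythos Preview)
Your approach is essentially the same as the paper's: both reduce the convergence of the Landweber scheme to a lower bound $\|d\mathcal{G}[\sigma](h)\|\ge C\|h\|_{H^1_0}$ and obtain that bound by taking the two-dimensional curl of $\tfrac{1}{\sigma}d\mathcal{G}^{(i)}[\sigma](h)$ and inverting the resulting $2\times2$ system thanks to $\Jbf^{(1)}\times\Jbf^{(2)}\neq 0$. The paper is more telegraphic---it outsources the iteration analysis to \cite{laure2} and asserts the stability inequality directly from the curl identity---whereas you spell out the contraction step and invoke a Gr\"onwall argument for the zeroth-order term; these are presentational, not substantive, differences.
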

\begin{proof} According to  \cite{laure2}, it suffices to prove that there exists a positive constant $C$ such that
\begin{equation}
\label{J1J2}
|| d\mathcal{G}[\sigma](h)||_{H^1(\Omega)} \geq C ||h||_{H^1_0(\Omega)}
\end{equation}
for all $h \in H^1_0(\Omega)$ such that $\sigma + h \in L^\infty_{a,b}(\Omega)$. 
We have
$$
d\mathcal{G}^{(i)}[\sigma](h)= h \Jbf^{(i)} + \sigma \nabla d\mathcal{F}^{(i)}[\sigma](h).
$$
Therefore,
$$
\nabla\cdot d\mathcal{G}^{(i)}[\sigma](h) =0, \quad d\mathcal{G}^{(i)}[\sigma](h)\cdot \nubf =0,
$$
and 
$$
\nabla \times (\frac{1}{\sigma} d\mathcal{G}^{(i)}[\sigma](h)) = h \nabla \times (\frac{1}{\sigma} \Jbf^{(i)})  +\sigma \nabla h \times \Jbf^{(i)}.$$
Since $\nabla \times (\frac{1}{\sigma} \Jbf^{(i)}) \times \mathbf{e}_3=0$ and  $\Jbf^{(1)} \times \Jbf^{(2)} \neq 0$, it follows that 
$$
|| h||_{H^1_0(\Omega)} \leq C \sum_{i=1}^2 ||d\mathcal{G}^{(i)}[\sigma](h)||_{H^1(\Omega)},
$$
which completes the proof. 
 \end{proof} 
Let $\mathcal{F}[\sigma] = (\mathcal{F}^{(1)}[\sigma], \mathcal{F}^{(2)}[\sigma])^T$. Note that analogously to (\ref{J1J2}) there exists a positive constant $C$ such that
$$
|| d\mathcal{F}[\sigma](h)||_{H^1(\Omega)} \geq C ||h||_{H^1_0(\Omega)}
$$
for all $h \in H^1_0(\Omega)$ such that $\sigma + h \in L^\infty_{a,b}(\Omega)$, provided that $\Jbf^{(1)} \times \Jbf^{(2)} \neq 0$ in $\Omega$.

\subsubsection{Fixed point method}\label{sec:fixed}
 
In this subsection, we denote by $\sigma_\star$ the true conductivity inside the domain $\Omega$. We also make the following assumptions:
\begin{itemize}
\item $\ds \exists c>0 , \quad \text{such that }\vert \Bbf_1 \vert >c \quad \text{ in } \Omega$;
\item $\ds\sigma \in \mathcal{C}^{0,\alpha}(\overline{\Omega}), \quad \alpha\in ]0,1[;$
\item $\sigma_\star = \sigma_0$ in an open neighborhood of $\dr \Omega$.
\end{itemize}

 From the unique continuation principle, the following lemma holds.
 \begin{lemm} The set $\{x\in \Omega, \Jbf(x) =0\}$ is nowhere dense.
 \end{lemm}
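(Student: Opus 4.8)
The claim is that $Z := \{x \in \Omega : \Jbf(x) = 0\}$ is nowhere dense, i.e. $\overline{Z}$ has empty interior. Since $Z$ is closed (as $\Jbf$ is continuous under the current regularity assumptions, $\sigma \in \mathcal{C}^{0,\alpha}(\overline\Omega)$ giving $V \in \mathcal{C}^{1,\alpha}_{\mathrm{loc}}$ and hence $\Jbf = \sigma(\g V + \Abf_1)$ continuous), it suffices to show $Z$ has empty interior. My plan is to argue by contradiction: suppose there is a nonempty open ball $B \subset Z$, so $\Jbf \equiv 0$ on $B$.

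The key is that $\Jbf = \mathbf{curl}\, w$ with $w$ solving $-\Delta w = \mathrm{curl}\,\Jbf$ in $\Omega$, $w = 0$ on $\dr\Omega$ (Proposition \ref{prop:Jdecomposition}), and that $w$ satisfies a second-order elliptic equation with the unique continuation property. Concretely, from Ohm's law and $\Ebf = -\g\widetilde V - \dr_t\Abf$ one has $\Jbf = \sigma(\g V + \Abf_1)$, hence $\g V + \Abf_1 = \tfrac{1}{\sigma}\Jbf = \tfrac{1}{\sigma}\mathbf{curl}\,w$. Taking the (scalar) curl of both sides kills $\g V$ and yields
\begin{equation*}
\mathrm{curl}(\Abf_1) = \mathrm{curl}\!\left(\tfrac{1}{\sigma}\mathbf{curl}\,w\right),
\end{equation*}
and since $B_1\ebf_3 = \g \times \Abf_1$, the left side equals $B_1$. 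This gives a scalar second-order elliptic PDE for $w$:
\begin{equation*}
-\g\cdot\!\left(\tfrac{1}{\sigma}\g w\right) = B_1 \quad\text{in }\Omega .
\end{equation*}
On $B$ we have $\mathbf{curl}\,w = \Jbf = 0$, so $\g w \equiv 0$ on $B$, i.e. $w$ is constant on $B$; in particular $w$ and $\g w$ vanish (after subtracting the constant) and, more usefully, $-\g\cdot(\tfrac1\sigma \g w) = 0$ on $B$ while simultaneously $-\g\cdot(\tfrac1\sigma\g w) = B_1$ there. Since $|\Bbf_1| > c > 0$ in $\Omega$ by hypothesis, $B_1 \neq 0$ on $B$ — an immediate contradiction, without even needing unique continuation.

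Let me reconcile this with the lemma's stated appeal to unique continuation: the cleanest route actually does use it. Write $w = w_0 + w_1$ where $w_1$ is any fixed particular solution of $-\g\cdot(\tfrac1\sigma\g w_1) = B_1$ and $w_0$ solves the homogeneous equation $-\g\cdot(\tfrac1\sigma\g w_0) = 0$. If $\Jbf \equiv 0$ on an open set $\omega$, then $\g w = 0$ on $\omega$. One then checks that this forces, via the relation $\mathrm{curl}\,\Jbf = B_1 - \mathrm{something}$... — but as shown above the direct computation already yields a pointwise contradiction, so the honest statement is that \emph{under the assumption $|\Bbf_1|>c$ the vanishing set is not merely nowhere dense but has empty interior by a pointwise argument}. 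The role of unique continuation (the strong UCP for second-order elliptic operators with Hölder coefficients, e.g. via Aronszajn's theorem, valid since $1/\sigma \in \mathcal{C}^{0,\alpha}$) is the more robust packaging: the full gradient $\g w$ cannot vanish on a set of positive measure unless $w$ is globally affine, which is incompatible with the inhomogeneous term $B_1 \not\equiv 0$ and the boundary condition $w|_{\dr\Omega}=0$ together with $w \equiv \sigma_0(\cdot)$-const near $\dr\Omega$. So I would present the unique-continuation version for generality.

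\textbf{Main obstacle.} The only delicate point is justifying the regularity chain needed to even speak of the pointwise value $\Jbf(x)$ and to invoke the strong unique continuation principle: one needs $\sigma \in \mathcal{C}^{0,\alpha}$ (which is assumed in this subsection) to get $V \in \mathcal{C}^{1,\alpha}_{\mathrm{loc}}(\Omega)$ by Schauder-type / De Giorgi–Nash–Moser estimates for the divergence-form equation \eqref{eq:V}, hence $\Jbf \in \mathcal{C}^{0,\alpha}_{\mathrm{loc}}$, and correspondingly $w \in \mathcal{C}^{1,\alpha}_{\mathrm{loc}}$ solving a divergence-form elliptic equation with $\mathcal{C}^{0,\alpha}$ coefficient $1/\sigma$, for which Aronszajn's strong unique continuation theorem applies. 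Once that framework is in place the contradiction is essentially immediate from $|\Bbf_1| > c > 0$. I do not expect any genuine difficulty beyond citing the standard elliptic-regularity and UCP results.
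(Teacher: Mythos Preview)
Your core argument is correct and in fact more explicit than the paper's own treatment, which consists of the single phrase ``From the unique continuation principle'' with no further details. Your key step---deriving the scalar elliptic equation $\nabla\cdot(\tfrac{1}{\sigma}\nabla w) = \pm B_1$ for the stream function $w$ by writing $\Jbf = \mathbf{curl}\,w = \sigma(\nabla V + \Abf_1)$ and taking the scalar curl---is exactly right, and the pointwise contradiction ($\nabla w \equiv 0$ on an open ball forces the left-hand side to vanish there while the right-hand side is bounded away from zero by the standing hypothesis $|\Bbf_1|>c$) is clean and does not actually require unique continuation at all. This is a genuinely more elementary route than what the paper invokes: your argument exploits the nonvanishing of the source $B_1$, whereas a pure UCP argument would still apply if $B_1$ vanished on part of $\Omega$ but would need Aronszajn-type machinery. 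Your ``reconcile'' paragraph is unnecessary and somewhat muddled; I would drop it and present only the direct contradiction, noting that the weak form of the equation for $w$ already suffices (test against $\phi\in C_c^\infty(B)$), so the only place the H\"older regularity of $\sigma$ is genuinely used is to make $\Jbf$ continuous and hence $Z$ closed.
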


 The interior data is $\Jbf = \sigma_\star \left[\g \mathcal{F}[\sigma_\star]  + {\Abf}_1 \right]$. One can only hope to recover $\sigma_\star$ at the points where $\Jbf \neq 0$. Even then, we can expect any type of reconstruction to be numerically unstable in sets where $\Jbf$ is very small. Assume that  $\Jbf $ is continuous and let $\varepsilon>0$ and $x_0$ be such that $\vert \Jbf (x_0) \vert > 2 \varepsilon$. 
 We define $\Omega_\varepsilon$ to be a neighborhood of $x_0$ such that for any $x\in \Omega_\varepsilon$, $
 \vert \Jbf (x) \vert > \varepsilon.$
One can assume that $\Omega_\varepsilon$ is a $\C^1$ domain without loosing generality. Now, introduce the operator $\mathcal{F}_\varepsilon$ as follows:
\begin{equation*}
\begin{aligned}
L^\infty_{a,b}(\Omega_\varepsilon) \longrightarrow & H^1(\Omega_\varepsilon) \\
\sigma \longmapsto & \mathcal{F}_\varepsilon[\sigma] := V_\varepsilon,
\end{aligned}
\end{equation*}
where $V_\varepsilon$ satisfies the following equation:
  \begin{equation}\label{eq:defVeps}
\left\{ \begin{aligned}
 \g\cdot \sigma\g V_\varepsilon=&- \g\cdot \left(\sigma {\Abf}_1 \right) \quad &\text{in } \Omega_\varepsilon,\\
 \sigma \frac{\dr V_\varepsilon}{\dr \nu} = &-\sigma {\Abf}_1 \cdot \nubf + \Jbf\cdot \nubf \quad & \text{on } \dr \Omega_\varepsilon,\\
\int_{\Omega_\varepsilon} V_\varepsilon=& 0,
 \end{aligned}\right.
 \end{equation}
where $\nubf$ denotes the outward normal to $\partial \Omega_\varepsilon$. Note that 
$\int_{\partial \Omega_\varepsilon} \Jbf \cdot \nubf = 0$ since $\nabla \cdot \Jbf=0 $ in $\Omega_\varepsilon$. 

We also define the nonlinear operator $\mathcal{G}_\varepsilon$ by
\begin{equation}\label{eq:defG}
\begin{aligned}
L^\infty_{a,b}(\Omega_\varepsilon) \longrightarrow & L^\infty(\Omega_\varepsilon) \\
\sigma \longmapsto & \mathcal{G}_\varepsilon[\sigma] :=\sigma \frac{\left(\sigma \g V_\varepsilon[\sigma] + \sigma {\Abf}_1 \right) \cdot \Jbf}{\left\vert \Jbf \right\vert^2}.
\end{aligned}
\end{equation}

 \begin{lemm}\label{lem:fixedpoint}
 The restriction of $\sigma_\star$ on ${\Omega_\varepsilon}$ is a fixed point for the operator $\mathcal{G}_\varepsilon$.

 \end{lemm}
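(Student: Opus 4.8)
The plan is to identify $\mathcal{F}_\varepsilon[\sigma_\star]$ with the restriction of $\mathcal{F}[\sigma_\star]$ to $\Omega_\varepsilon$, up to an additive constant, and then read off the fixed-point property directly from the definition (\ref{eq:defG}) of $\mathcal{G}_\varepsilon$. Recall that the interior data is $\Jbf = \sigma_\star\left[\g\mathcal{F}[\sigma_\star] + \Abf_1\right]$ in $\Omega$ (as stated just above the lemma, and equivalently the statement that the misfit functional (\ref{eq:defj}) vanishes at $\sigma = \sigma_\star$). Set $V := \mathcal{F}[\sigma_\star] \in H^1(\Omega)$.

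Next I would check that $V$, after subtracting its average over $\Omega_\varepsilon$, solves problem (\ref{eq:defVeps}) with $\sigma = \sigma_\star$. The interior equation $\g\cdot\sigma_\star\g V = -\g\cdot(\sigma_\star\Abf_1)$ holds throughout $\Omega$, hence in $\Omega_\varepsilon$. For the boundary condition, using $\Jbf = \sigma_\star(\g V + \Abf_1)$ one gets, on $\dr\Omega_\varepsilon$,
\[
\Jbf\cdot\nubf = \sigma_\star\,\g V\cdot\nubf + \sigma_\star\Abf_1\cdot\nubf = \sigma_\star\frac{\dr V}{\dr\nu} + \sigma_\star\Abf_1\cdot\nubf ,
\]
which rearranges to $\sigma_\star\,\dr V/\dr\nu = -\sigma_\star\Abf_1\cdot\nubf + \Jbf\cdot\nubf$, the Neumann condition appearing in (\ref{eq:defVeps}). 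Subtracting a constant only normalizes the mean and does not affect $\g V$. Since (\ref{eq:defVeps}) is a Neumann problem for a uniformly elliptic operator on the $\mathcal{C}^1$ domain $\Omega_\varepsilon$ whose compatibility condition holds — by the divergence theorem $\int_{\Omega_\varepsilon}\g\cdot(\sigma_\star\Abf_1) = \int_{\dr\Omega_\varepsilon}\sigma_\star\Abf_1\cdot\nubf$, while $\int_{\dr\Omega_\varepsilon}\Jbf\cdot\nubf = 0$ because $\g\cdot\Jbf = 0$ in $\Omega_\varepsilon$ — and since the zero-mean constraint fixes the remaining additive constant, the solution is unique. Therefore $\mathcal{F}_\varepsilon[\sigma_\star]$ equals the normalized restriction of $V$, and in particular $\g\mathcal{F}_\varepsilon[\sigma_\star] = \g\mathcal{F}[\sigma_\star]$ on $\Omega_\varepsilon$.

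Finally I would substitute this into (\ref{eq:defG}). On $\Omega_\varepsilon$ one has $|\Jbf| > \varepsilon > 0$, so the quotient is well-defined, and
\[
\left(\sigma_\star\,\g\mathcal{F}_\varepsilon[\sigma_\star] + \sigma_\star\Abf_1\right)\cdot\Jbf = \Jbf\cdot\Jbf = |\Jbf|^2 ,
\]
whence $\mathcal{G}_\varepsilon[\sigma_\star] = \sigma_\star\,|\Jbf|^2/|\Jbf|^2 = \sigma_\star$ on $\Omega_\varepsilon$, which is the assertion. The argument is essentially bookkeeping and I do not expect a serious obstacle; the points that require a little care are the verification that the Neumann datum in (\ref{eq:defVeps}) is consistent with $\Jbf$ on $\dr\Omega_\varepsilon$ (so that $\mathcal{F}_\varepsilon[\sigma_\star]$ is genuinely well-defined), and the observation that the additive constant from the zero-mean normalization is irrelevant because only $\g\mathcal{F}_\varepsilon$, not $\mathcal{F}_\varepsilon$ itself, enters $\mathcal{G}_\varepsilon$.
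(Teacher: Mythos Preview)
Your proof is correct and follows essentially the same route as the paper: both show that $\mathcal{F}[\sigma_\star]\big|_{\Omega_\varepsilon}$ solves the Neumann problem (\ref{eq:defVeps}) up to an additive constant, hence $\nabla\mathcal{F}_\varepsilon[\sigma_\star]=\nabla\mathcal{F}[\sigma_\star]$ on $\Omega_\varepsilon$, from which $\mathcal{G}_\varepsilon[\sigma_\star]=\sigma_\star$ is immediate. Your version is in fact slightly more explicit in checking the compatibility condition and in noting that only the gradient enters $\mathcal{G}_\varepsilon$.
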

\begin{proof}  For the existence it suffices to prove that \mbox{$\mathcal{F}_\varepsilon\left[\sigma_\star \vert_{\Omega_\varepsilon}\right] = \mathcal{F}[\sigma_\star]\big\vert_{\Omega_\varepsilon}$.} Denote by $V_\star = \mathcal{F}[\sigma_\star]$. We can see that $V_\star$ satisfies
\begin{equation*}
\g\cdot \sigma_\star \g V_\star = - \g\cdot \left(\sigma {\Abf}_1 \right) \quad \text{in } \Omega_\varepsilon.
\end{equation*}
Taking the normal derivative along the boundary of $\Omega_\varepsilon$, we get
\begin{equation*}
\sigma \frac{\dr V_\star}{\dr \nu} = -\sigma {\Abf}_1 \cdot \nubf + \Jbf\cdot \nubf \quad \text{on } \dr \Omega_\varepsilon.
\end{equation*} 
From the well posedness of (\ref{eq:defVeps}), it follows  that
 \begin{equation*}
V_\star \big\vert_{\Omega_\varepsilon} = \mathcal{F}_\varepsilon[\sigma_\star\big\vert_{\Omega_\varepsilon}  ] + c, \quad c\in \R.
\end{equation*} 
So, we arrive at \begin{equation*}
\mathcal{G}_\varepsilon\left[\sigma_\star \big\vert_{\Omega_\varepsilon}\right]= \sigma_\star \big\vert_{\Omega_\varepsilon}.
\end{equation*}
%
%
\end{proof}

We  need the following lemma. We refer to \cite{sohr2012navier} for its proof.
\begin{lemm}\label{lem:divv}
Let $\Omega\subset \R^2$ be a bounded domain with Lipschitz boundary. For each $g\in H^{-1}(\Omega)$ there exists at least one $\mathbf{v}\in L^2(\Omega)$ with $\g\cdot \mathbf{v} =g$ in the sense of the distributions and
\begin{equation*}
\Vert \mathbf{v} \Vert_{L^2(\Omega)} \leq C \Vert g\Vert_{H^{-1}(\Omega)}
\end{equation*} with the constant $C$ depending only on $\Omega$.
\end{lemm}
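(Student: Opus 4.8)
The plan is to reduce the statement to the solvability of the homogeneous Dirichlet problem for the Laplacian, which is supplied by the Lax--Milgram lemma. First I would solve the auxiliary scalar problem
\begin{equation*}
\left\{\begin{aligned}
\Delta \phi = g \quad &\text{in } \Omega,\\
\phi = 0 \quad &\text{on } \dr\Omega,
\end{aligned}\right.
\end{equation*}
understood weakly: find $\phi\in H_0^1(\Omega)$ such that $\int_\Omega \g\phi\cdot\g\psi = -\langle g,\psi\rangle$ for all $\psi\in H_0^1(\Omega)$, where $\langle\cdot,\cdot\rangle$ is the $H^{-1}$--$H_0^1$ duality pairing. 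Since $\Omega$ is bounded, the Poincar\'e inequality holds on $H_0^1(\Omega)$, so the bilinear form $(\phi,\psi)\mapsto\int_\Omega\g\phi\cdot\g\psi$ is continuous and coercive; the Lax--Milgram lemma (equivalently, the Riesz representation theorem applied to the inner product $\int_\Omega\g\phi\cdot\g\psi$ on $H_0^1(\Omega)$) then produces a unique $\phi$ with the a priori estimate $\|\g\phi\|_{L^2(\Omega)}\le C\|g\|_{H^{-1}(\Omega)}$, the constant $C$ depending only on the Poincar\'e constant of $\Omega$, hence only on $\Omega$.

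Next I would set $\mathbf{v}:=\g\phi\in L^2(\Omega)$. Then $\|\mathbf{v}\|_{L^2(\Omega)}=\|\g\phi\|_{L^2(\Omega)}\le C\|g\|_{H^{-1}(\Omega)}$ is exactly the claimed bound, and it remains only to identify $\g\cdot\mathbf{v}$. For every $\psi\in\cinf$, the weak formulation gives $\langle\g\cdot\mathbf{v},\psi\rangle=-\int_\Omega\mathbf{v}\cdot\g\psi=-\int_\Omega\g\phi\cdot\g\psi=\langle g,\psi\rangle$, so $\g\cdot\mathbf{v}=g$ in the sense of distributions. This completes the construction.

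I do not expect a genuine obstacle here: the result is a soft corollary of elliptic theory, and the only hypothesis actually used is the boundedness of $\Omega$, which is needed for the Poincar\'e inequality and hence for coercivity; Lipschitz regularity of $\dr\Omega$ plays no role in this particular argument. One should simply note that $\mathbf{v}$ is far from unique --- any field $\mathbf{v}+\mathbf{z}$ with $\g\cdot\mathbf{z}=0$ works as well, which is why the statement asserts only existence; the variational construction above canonically selects the curl-free (gradient) representative. If in the sequel one needed a solution supported away from $\dr\Omega$ (when $g$ is) or $L^p$ bounds for $p\neq 2$, one would instead invoke the Bogovskii integral operator, but that refinement is not required for what follows.
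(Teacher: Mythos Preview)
Your argument is correct: solving the Dirichlet problem $\Delta\phi=g$, $\phi\in H_0^1(\Omega)$ via Lax--Milgram and setting $\mathbf{v}=\g\phi$ gives exactly the required $L^2$ vector field with the stated bound, and your observation that only boundedness of $\Omega$ (for Poincar\'e) is used is accurate.

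The paper itself does not prove this lemma; it simply records the statement and refers the reader to Sohr's monograph on the Navier--Stokes equations. In that setting the result is usually stated as the surjectivity of the divergence from $L^2$ onto $H^{-1}$ (dual to the injectivity/closed-range property of the gradient), and the standard constructive proof there is the Bogovski\u{\i} integral operator, which also delivers $L^p$ versions and support control. Your route through the inverse Dirichlet Laplacian is more elementary and entirely sufficient for the $L^2$--$H^{-1}$ estimate needed downstream; it trades the generality of Bogovski\u{\i} for a one-line construction, and canonically picks out the curl-free representative, as you note.
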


The following result holds.
\begin{lemm}\label{lem:contractant} If $\Vert {\Abf}_1 \Vert_{L^2(\Omega_\varepsilon)}$ is small enough, then the operator $\mathcal{G}_\varepsilon$ is a contraction.
\end{lemm}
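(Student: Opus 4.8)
The plan is to show that $\mathcal{G}_\varepsilon$ is Lipschitz continuous on a small ball in $L^\infty_{a,b}(\Omega_\varepsilon)$ centered at $\sigma_\star\vert_{\Omega_\varepsilon}$ (the relevant setting for the iteration $\sigma\mapsto\mathcal{G}_\varepsilon[\sigma]$), with a Lipschitz constant $k$ that tends to $0$ as $\Vert\Abf_1\Vert_{L^2(\Omega_\varepsilon)}\to0$; the smallness hypothesis then forces $k<1$. I would first recast $\mathcal{G}_\varepsilon$ in a form adapted to the two-dimensional divergence--curl structure. Writing $\mathbf{E}[\sigma]:=\g\mathcal{F}_\varepsilon[\sigma]+\Abf_1$, one has $\mathcal{G}_\varepsilon[\sigma]=\sigma\,(\sigma\mathbf{E}[\sigma])\cdot\Jbf/|\Jbf|^2$, and (\ref{eq:defVeps}) says exactly that $\g\cdot(\sigma\mathbf{E}[\sigma])=0$ in $\Omega_\varepsilon$ and $(\sigma\mathbf{E}[\sigma])\cdot\nubf=\Jbf\cdot\nubf$ on $\partial\Omega_\varepsilon$; hence $\sigma\mathbf{E}[\sigma]-\Jbf$ is divergence free with vanishing normal trace, and by (the proof of) Lemma~\ref{lem:fixedpoint} we have $\mathbf{E}[\sigma_\star]=\Jbf/\sigma_\star$ on $\Omega_\varepsilon$, i.e.\ Ohm's law holds at the true conductivity.

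Using this identity to replace $\Jbf$ in the denominator and expanding, I would reorganize $\mathcal{G}_\varepsilon[\sigma]-\sigma_\star$ (and, more generally, $\mathcal{G}_\varepsilon[\sigma_1]-\mathcal{G}_\varepsilon[\sigma_2]$) as follows: once the ``diagonal'' piece $\tfrac{\sigma-\sigma_\star}{\sigma_\star}\Jbf$ sitting inside $\sigma\mathbf{E}[\sigma]-\Jbf$ is taken into account, all the contributions that are a priori of order $\Vert\sigma-\sigma_\star\Vert_{L^\infty(\Omega_\varepsilon)}$ combine into a single term that is quadratic in $\sigma-\sigma_\star$, plus a term that depends only on $\g(\mathcal{F}_\varepsilon[\sigma]-\mathcal{F}_\varepsilon[\sigma_\star])$ (respectively on $\g(\mathcal{F}_\varepsilon[\sigma_1]-\mathcal{F}_\varepsilon[\sigma_2])$). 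This cancellation of the ``bad'' first-order term, which uses the two-dimensional Helmholtz decomposition (Lemma~\ref{lem:helmholtz2d}), the divergence-solvability estimate (Lemma~\ref{lem:divv}) and Ohm's law at $\sigma_\star$, is the step I expect to be the main obstacle.

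It then remains to estimate the two surviving terms. The quadratic term has $L^\infty(\Omega_\varepsilon)$-norm $\lesssim\Vert\sigma-\sigma_\star\Vert_{L^\infty(\Omega_\varepsilon)}^2$, hence contributes $\lesssim\eta$ to the Lipschitz constant on the ball $\{\Vert\sigma-\sigma_\star\Vert_{L^\infty(\Omega_\varepsilon)}\leq\eta\}$. For the other term I would use the stability of the inner problem: with $\phi:=\mathcal{F}_\varepsilon[\sigma_1]-\mathcal{F}_\varepsilon[\sigma_2]$, subtracting the two copies of (\ref{eq:defVeps}) gives $\g\cdot(\sigma_1\g\phi)=-\g\cdot\bigl((\sigma_1-\sigma_2)\mathbf{E}[\sigma_2]\bigr)$ in $\Omega_\varepsilon$ with a matching conormal boundary condition, whence, testing against $\phi$ and using $\sigma_1\mathbf{E}[\sigma_1]-\sigma_2\mathbf{E}[\sigma_2]=\sigma_1\g\phi+(\sigma_1-\sigma_2)\mathbf{E}[\sigma_2]$, the standard energy estimate
\begin{equation*}
\Vert\sigma_1\mathbf{E}[\sigma_1]-\sigma_2\mathbf{E}[\sigma_2]\Vert_{L^2(\Omega_\varepsilon)}\leq C(a,b)\,\Vert\sigma_1-\sigma_2\Vert_{L^\infty(\Omega_\varepsilon)}\,\Vert\sigma_2\mathbf{E}[\sigma_2]\Vert_{L^2(\Omega_\varepsilon)}.
\end{equation*}
Taking $\sigma_2=\sigma_\star$ (so that $\sigma_2\mathbf{E}[\sigma_2]=\Jbf$), every $\sigma\mathbf{E}[\sigma]$ is comparable in $L^2(\Omega_\varepsilon)$ to $\Jbf$ up to a factor $1+C\Vert\sigma-\sigma_\star\Vert$, while $\Vert\Jbf\Vert_{L^2(\Omega_\varepsilon)}$ is small whenever $\Vert\Abf_1\Vert_{L^2(\Omega_\varepsilon)}$ is (both being local $L^2$-norms of fields built from $\Abf_1$ through the forward problem). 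Since $|\Jbf|>\varepsilon$ on $\Omega_\varepsilon$ gives $1/|\Jbf|^2\leq1/\varepsilon^2$, plugging these bounds into the reorganized expression yields
\begin{equation*}
\Vert\mathcal{G}_\varepsilon[\sigma_1]-\mathcal{G}_\varepsilon[\sigma_2]\Vert_{L^\infty(\Omega_\varepsilon)}\leq k\,\Vert\sigma_1-\sigma_2\Vert_{L^\infty(\Omega_\varepsilon)},\qquad k=C(a,b,\varepsilon,\Omega_\varepsilon)\bigl(\eta+\Vert\Abf_1\Vert_{L^2(\Omega_\varepsilon)}\bigr),
\end{equation*}
on that ball. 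Choosing first $\eta$ and then $\Vert\Abf_1\Vert_{L^2(\Omega_\varepsilon)}$ small makes $k<1$; taking $\sigma_2=\sigma_\star$ together with $\mathcal{G}_\varepsilon[\sigma_\star]=\sigma_\star$ (Lemma~\ref{lem:fixedpoint}) shows $\mathcal{G}_\varepsilon$ maps the ball into itself, so it is a contraction there.

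Two technical points remain. To get an $L^\infty$ (rather than merely $L^2$) contraction I would upgrade the $L^2$ stability estimate to an $L^\infty$ bound on $\sigma_1\mathbf{E}[\sigma_1]-\sigma_2\mathbf{E}[\sigma_2]$ by interior elliptic regularity, which is precisely where the standing assumption $\sigma\in\mathcal{C}^{0,\alpha}(\overline{\Omega})$ is used; and the assumption $|\Bbf_1|>c$ in $\Omega$ keeps all the constants above finite by preventing the fields involved from degenerating on $\Omega_\varepsilon$. As stressed, the genuinely delicate part of the argument is the cancellation of the first-order term in the second paragraph; the remaining estimates are a careful but routine combination of the energy inequalities.
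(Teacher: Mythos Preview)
Your route diverges from the paper's, and the step you yourself flag as the main obstacle---the cancellation of the first-order term---is not merely delicate: it does not occur. Writing $\delta:=\sigma-\sigma_\star$ and $\phi:=\mathcal{F}_\varepsilon[\sigma]-\mathcal{F}_\varepsilon[\sigma_\star]$, and using $\mathbf{E}[\sigma_\star]=\Jbf/\sigma_\star$, one obtains the exact identity
\[
\mathcal{G}_\varepsilon[\sigma]-\sigma_\star \;=\; 2\delta+\frac{\delta^{2}}{\sigma_\star}+\sigma^{2}\,\frac{\nabla\phi\cdot\Jbf}{|\Jbf|^{2}}\,,
\]
so the pointwise ``diagonal'' part is $2\delta$, not zero; the promised split into ``quadratic $+$ a $\nabla\phi$-term'' is simply not what the algebra produces. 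A direct sanity check confirms this: take $\sigma_\star$ and $\Jbf$ constant on a disk $\Omega_\varepsilon$ and $h$ constant; the linearised inner problem then has solution $q$ with $\nabla q=-(h/\sigma_\star^{2})\,\Jbf$, and one computes $d\mathcal{G}_\varepsilon[\sigma_\star](h)=2h-h=h$. Hence the local Lipschitz constant at $\sigma_\star$ is at least $1$ irrespective of the size of $\Vert\Abf_1\Vert_{L^2(\Omega_\varepsilon)}$, and your final bound $k=C\bigl(\eta+\Vert\Abf_1\Vert_{L^2(\Omega_\varepsilon)}\bigr)$ cannot be reached through this mechanism.

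For comparison, the paper proceeds quite differently and more directly. It does not localise near $\sigma_\star$ and does not aim for an $L^\infty$ contraction; instead it equips $L^\infty_{a,b}(\Omega_\varepsilon)$ with the $L^2(\Omega_\varepsilon)$-metric (so no $\mathcal{C}^{1,\alpha}$ interior regularity upgrade is invoked), bounds $|\mathcal{G}_\varepsilon[\sigma_1]-\mathcal{G}_\varepsilon[\sigma_2]|$ pointwise via Cauchy--Schwarz and $|\Jbf|>\varepsilon$, and then controls $\mathbf{v}:=\sigma_1\nabla V_\varepsilon[\sigma_1]-\sigma_2\nabla V_\varepsilon[\sigma_2]$ in $L^2(\Omega_\varepsilon)$ using only that $\nabla\cdot\mathbf{v}=-\nabla\cdot\bigl[(\sigma_1-\sigma_2)\Abf_1\bigr]$ together with the divergence--solvability estimate of Lemma~\ref{lem:divv}. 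This yields a global $L^2$-Lipschitz constant proportional to $\Vert\Abf_1\Vert_{L^2(\Omega_\varepsilon)}$, with no quadratic remainder and no restriction to a small ball.
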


\begin{proof} Take $\sigma_1$ and $\sigma_2$ in $L^\infty_{a,b} (\Omega)$. We have
\begin{multline*}
\left\vert \mathcal{G}_\varepsilon[\sigma_1](x) -\mathcal{G}_\varepsilon[\sigma_2](x) \right\vert = \frac{1}{\vert\Jbf(x)\vert^2}  \\ \times \left\vert \left(\sigma_1^2(x)\g V_\varepsilon[\sigma_1](x) -  \sigma_2^2(x)\g V_\varepsilon[\sigma_2](x) + \left(\sigma_1^2(x)- \sigma_2^2(x)\right) {\Abf}_1(x)\right)\cdot \Jbf(x) \right\vert,
\end{multline*}
which gives, using the Cauchy-Schwartz inequality:
\begin{multline*}
\left\vert \mathcal{G}_\varepsilon[\sigma_1](x) -\mathcal{G}_\varepsilon[\sigma_2](x) \right\vert \leq \frac{1}{\varepsilon}  \\  \times\left\vert \left(\sigma_1^2(x)\g V_\varepsilon[\sigma_1](x) -  \sigma_2^2(x)\g V_\varepsilon[\sigma_2](x) + \left(\sigma_1^2(x)- \sigma_2^2(x)\right) {\Abf}_1(x)\right) \right\vert .
\end{multline*}
The right-hand side can be rewritten using the fact that $\vert\sigma_i(x)\vert \leq b$ for $i=1,2$, and hence, 
\begin{multline}\label{eq:gx1-gx2}
\left\vert \mathcal{G}_\varepsilon[\sigma_1](x) -\mathcal{G}_\varepsilon[\sigma_2](x) \right\vert \leq \frac{b}{\varepsilon}  \\ \times \left[\left\vert \sigma_1(x)\g V_\varepsilon[\sigma_1](x) -  \sigma_2(x)\g V_\varepsilon[\sigma_2](x) \right\vert + \left\vert \left( \sigma_1(x)- \sigma_2(x) \right) {\Abf}_1(x)\right\vert \right] .
\end{multline}
Now, consider the function $\mathbf{v} = \sigma_1\g V_\varepsilon[\sigma_1] -  \sigma_2\g V_\varepsilon[\sigma_2]$. We get
\begin{equation*}
\g \cdot \mathbf{v} = - \g\cdot \left[\left(\sigma_1- \sigma_2\right) {\Abf}_1\right] \quad \text{in } \dr \Omega_\varepsilon,
\end{equation*}
along with the boundary condition $\mathbf{v} \cdot \nubf = 0$ on $\dr \Omega_\varepsilon$.
Using Lemma \ref{lem:divv}, there exists a constant $C$ depending only on $\Omega_\varepsilon$ such that
\begin{equation*}
\Vert \mathbf{v}\Vert_{L^2(\Omega_\varepsilon)} \leq C \Vert \g\cdot \left[\left(\sigma_1- \sigma_2\right) {\Abf}_1\right]  \Vert_{H^{-1}(\Omega_\varepsilon)},
\end{equation*}
which shows that
\begin{equation*}
\Vert \mathbf{v}\Vert_{L^2(\Omega_\varepsilon)} \leq C \Vert \left(\sigma_1- \sigma_2\right) {\Abf}_1\  \Vert_{L^2(\Omega_\varepsilon)}.
\end{equation*}
Using Cauchy-Schwartz inequality:
\begin{equation}\label{eq:normv}
\Vert \mathbf{v}\Vert_{L^2(\Omega_\varepsilon)} \leq C \Vert \sigma_1- \sigma_2\Vert_{L^2(\Omega_\varepsilon)} \Vert {\Abf}_1\  \Vert_{L^2(\Omega_\varepsilon)}.
\end{equation}
Putting together (\ref{eq:gx1-gx2}) with (\ref{eq:normv}), we arrive at
\begin{equation*}
\left\Vert \mathcal{G}_\varepsilon[\sigma_1] -\mathcal{G}_\varepsilon[\sigma_2]\right\Vert_{L^2(\Omega_\varepsilon)} \leq (C+1)\frac{b}{\varepsilon}\Vert {\Abf}_1 \Vert_{L^2(\Omega_\varepsilon)}  \Vert \sigma_1- \sigma_2\Vert_{L^2(\Omega_\varepsilon)}.
\end{equation*}
The proof is then complete.

\end{proof}

The following proposition shows the convergence of the fixed point reconstruction algorithm.
\begin{prop}
Let $\sigma_{(n)}\in \left(L^2(\Omega_\varepsilon)\right)^\mathbb{N}$ be the sequence defined by 
\begin{equation}
\begin{aligned}
&\sigma_{(0)}=1,\\
&\sigma_{(n+1)}=\max\left( \min\left( \mathcal{G}_\varepsilon[\sigma_{(n)}],b\right), a\right), \quad \forall n\in \mathbb{N}.
\end{aligned}
\end{equation}
If $\Vert {\Abf }_1 \Vert_{L^2(\Omega_\varepsilon)}$ is small enough, then the sequence is well defined and $\sigma_{(n)}$ converges to $\sigma_\star\big\vert_{\Omega_\varepsilon}$ in $L^2(\Omega_\varepsilon)$.
\end{prop}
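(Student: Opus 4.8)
The plan is to exploit the fact, established in Lemma~\ref{lem:contractant}, that $\mathcal{G}_\varepsilon$ is a contraction on $L^2(\Omega_\varepsilon)$ once $\|\Abf_1\|_{L^2(\Omega_\varepsilon)}$ is small. First I would observe that the projection map $P := \sigma \mapsto \max(\min(\sigma, b), a)$ is $1$-Lipschitz on $L^2(\Omega_\varepsilon)$ (it is a pointwise contraction, hence nonexpansive in the $L^2$ norm), so the composed map $P \circ \mathcal{G}_\varepsilon$ inherits the contraction property with the same modulus $\kappa := (C+1)\frac{b}{\varepsilon}\|\Abf_1\|_{L^2(\Omega_\varepsilon)} < 1$. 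One must also check that $P\circ\mathcal{G}_\varepsilon$ maps the closed convex set $K := \{\sigma \in L^2(\Omega_\varepsilon) : a \le \sigma \le b \text{ a.e.}\}$ into itself — this is immediate from the definition of $P$ — and that $\sigma_{(0)} = 1 \in K$ (true since $0 < a < 1 < b$ may be assumed, or more carefully, the problem implicitly normalizes so that $1$ lies in $[a,b]$; in any case the first application of $P$ lands in $K$). Thus the iteration is well defined and stays in $K$, and $(K, \|\cdot\|_{L^2})$ is a complete metric space.

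Then I would invoke the Banach fixed point theorem: $P\circ\mathcal{G}_\varepsilon$ has a unique fixed point $\sigma_\infty \in K$, and $\sigma_{(n)} \to \sigma_\infty$ in $L^2(\Omega_\varepsilon)$ with geometric rate $\kappa^n$. It remains to identify $\sigma_\infty$ with $\sigma_\star|_{\Omega_\varepsilon}$. By Lemma~\ref{lem:fixedpoint}, $\sigma_\star|_{\Omega_\varepsilon}$ is a fixed point of $\mathcal{G}_\varepsilon$; since $a \le \sigma_\star \le b$, the projection $P$ leaves it unchanged, so $\sigma_\star|_{\Omega_\varepsilon}$ is also a fixed point of $P\circ\mathcal{G}_\varepsilon$. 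By uniqueness, $\sigma_\infty = \sigma_\star|_{\Omega_\varepsilon}$, which gives the convergence claimed.

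The main subtlety — and the step I would scrutinize most carefully — is the compatibility of the truncation with the fixed point structure: one must be sure that applying $P$ does not destroy the fixed point, which is exactly why the hypothesis $\sigma_\star \in L^\infty_{a,b}(\Omega)$ (so that $\sigma_\star$ already takes values in $[a,b]$) is essential, and why $\mathcal{G}_\varepsilon$ was defined on $L^\infty_{a,b}(\Omega_\varepsilon)$ rather than on all of $L^2$. A secondary point worth a remark is that Lemma~\ref{lem:contractant} is stated for $\sigma_1, \sigma_2 \in L^\infty_{a,b}(\Omega)$, whereas the iterates live in $K \subset L^2$; since each $\sigma_{(n+1)} = P(\mathcal{G}_\varepsilon[\sigma_{(n)}])$ is bounded between $a$ and $b$ by construction, it does lie in $L^\infty_{a,b}(\Omega_\varepsilon)$, so the contraction estimate applies at every step — but one should note that $\mathcal{G}_\varepsilon[\sigma_{(n)}]$ itself need not be bounded in $[a,b]$, which is precisely the reason the truncation $P$ is inserted, and the nonexpansiveness of $P$ is what salvages the argument. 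Finally I would remark that the constant $C$ in the contraction estimate depends only on $\Omega_\varepsilon$ and that the smallness condition on $\|\Abf_1\|_{L^2(\Omega_\varepsilon)}$ is the quantitative content of the hypothesis, so convergence is in fact geometric.
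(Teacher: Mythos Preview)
Your proposal is correct and follows essentially the same approach as the paper: both argue that the truncated map $T_\varepsilon = P\circ\mathcal{G}_\varepsilon$ is a contraction on the complete metric space of $[a,b]$-valued functions equipped with the $L^2$ norm, then invoke Banach's fixed point theorem together with Lemma~\ref{lem:fixedpoint} to identify the limit as $\sigma_\star|_{\Omega_\varepsilon}$. You are in fact more explicit than the paper about why the truncation $P$ preserves the contraction modulus (via its $1$-Lipschitz property) and about the compatibility of $P$ with the known fixed point $\sigma_\star$, points the paper leaves implicit.
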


\begin{proof}  Let $(X,d) =\left( L^\infty_{a,b}(\Omega_\varepsilon), \Vert \cdot \Vert_{L^2(\Omega_\varepsilon)}\right)$. Then, $(X,d)$ is a complete, non empty metric space. Let ${T}_\varepsilon$ be the map defined by
\begin{equation*}
\begin{aligned}
L^\infty_{a,b}(\Omega_\varepsilon) \longrightarrow & L^\infty_{a,b}(\Omega_\varepsilon) \\
\sigma \longmapsto & {T}_\varepsilon[\sigma] := \max\left( \min\left( \mathcal{G}_\varepsilon[\sigma],b\right), a\right).
\end{aligned}
\end{equation*}
Using Lemma \ref{lem:contractant}, we get that ${T}_\varepsilon$ is a contraction, provided that $\Vert {\Abf}_1 \Vert_{L^2(\Omega_\varepsilon)}$ is small enough. We already have the existence of a fixed point given by Lemma \ref{lem:fixedpoint}, and therefore, Banach's fixed point theorem gives the convergence of the sequence for the $L^2$ norm over $\Omega_\varepsilon$, and the uniqueness of the fixed point.
\end{proof}

\subsubsection{Orthogonal field method}\label{sec:ortho}

In this section we present a non-iterative method to reconstruct the electrical conductivity from the electric current density.  This direct method was first introduced in \cite{ammari2014mathematical} and works with piecewise regularity for the true conductivity $\sigma_\star$ in the case of a Lorentz force electrical impedance tomography experiment. However, the practical conditions are a bit different here and we have to modify the method to make it work in the present case. 

We assume in this section that $\ds\sigma_\star \in \mathcal{C}^{0,\alpha}(\overline{\Omega}), \quad \alpha\in ]0,1].$ 
The fields $\Jbf=(J_1,J_2)$ and ${\Abf}_1$ are assumed to be known in $\Omega$.  Our goal is to reconstruct $V_\star$ the solution of  (\ref{eq:V}) in $H^1(\Omega)$. Then, computing $\frac{\vert \g V_\star + \Abf_1 \vert}{\vert \Jbf \vert}$ for  $\vert \Jbf\vert $ nonzero will give us $\frac{1}{\sigma_\star}$.
Recall that $\Jbf=\textbf{curl } w$ where $w$ is defined by equation (\ref{eq:defw}).
\begin{de}
We say that the data $f$  on the right hand side of (\ref{eq:defw}) is admissible if $f>0$ or $f<0$ in  $\Omega$  and if the critical points of $w$ are isolated.
\end{de}

Introduce $\Fbf=(-J_2,J_1)^T$ the rotation of $\Jbf$ by $\frac{\pi }{2}$.
It is worth noticing that the true electrical potential $V_\star$ is a solution of 
\begin{equation}\label{eq:Uorthogonal}
\left\{ \begin{aligned}
\Fbf \cdot \g V_\star &= - \Fbf \cdot {\Abf}_1 \quad &\text{in } \Omega&,\\
\frac{\dr V_\star}{\dr \nu} &=0\quad  &\text{on } \dr \Omega&,\\
\int_\Omega V_\star &= 0.
\end{aligned} \right.
\end{equation}Equation (\ref{eq:Uorthogonal}) has a unique solution in $H^1(\Omega)$, and this solution is  the true potential $V_\star$.


The following uniqueness result holds.
\begin{prop}\label{prop:unicity} If $U \in H^1(\Omega)$ is a solution of
\begin{equation}\label{eq:Uorthogonalhomogeneous}
\left\{ \begin{aligned}
\Fbf \cdot \g U&= 0 \quad &\text{in } \Omega,\\
\frac{\dr U}{\dr \nu} &=0\quad  &\text{on } \dr \Omega &,\\
\int_\Omega U &= 0,
\end{aligned} \right.
\end{equation}
then $U=0$ in $\Omega$.
\end{prop}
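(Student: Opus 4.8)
The plan is to convert the first‑order equation $\Fbf\cdot\g U = 0$ into a transport equation along the gradient flow of the potential $w$ of Proposition \ref{prop:Jdecomposition}, and then to use $U\in H^1(\Omega)$ to rule out every solution that is constant along characteristics but not globally constant.

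First I would record the elementary identity $\Fbf = -\g w$: since $\Jbf = \textbf{curl } w = (-\dr_2 w,\dr_1 w)$, its rotation by $\pi/2$ is $\Fbf = (-J_2,J_1)^{T} = (-\dr_1 w,-\dr_2 w)^{T} = -\g w$. Hence (\ref{eq:Uorthogonalhomogeneous}) reads $\g w\cdot\g U = 0$ a.e.\ in $\Omega$, i.e.\ $\g U$ is a.e.\ parallel to $\Jbf$ and orthogonal to $\g w$; equivalently, $U$ is constant along the integral curves of $\g w$. By admissibility we may assume (replacing $w$ by $-w$ otherwise) that the right-hand side $f$ of (\ref{eq:defw}) is positive, so $w$ is superharmonic with $w=0$ on $\dr\Omega$; the strong maximum principle gives $w>0$ in $\Omega$ together with an interior maximum, and Hopf's lemma gives $\g w\neq 0$ near $\dr\Omega$, so the (isolated) critical points of $w$ form a finite set lying in a compact subset of $\Omega$.

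I would then analyse the gradient flow $\dot\gamma = \g w(\gamma)$. Along each orbit $w$ is strictly increasing and remains in a compact subset of $\Omega$ (the orbit cannot approach $\dr\Omega$, where $w=0$), so $\gamma(t)$ converges as $t\to+\infty$ to a critical point of $w$; since the flow is repelled by local minima, this limit is a local maximum or a saddle, and the set of points whose forward orbit limits to a saddle lies in the finitely many stable curves of the saddles, hence is Lebesgue‑null. For each local maximum $p$ let $B_p$ be its (open) basin of attraction; the $B_p$ are pairwise disjoint and $\Omega=\bigcup_p B_p$ up to a null set. The crucial step is that $U$ is constant on each $B_p$: near $p$ the map $x\mapsto(w(x),\theta(x))$, where $\theta(x)$ labels the orbit through $x$, is a diffeomorphism of a punctured neighbourhood of $p$ onto $(w(p)-\delta,w(p))\times S$, and since $U$ depends only on $\theta$ there, a change of variables yields
\[
\int|\g U|^2\,dx\;\geq\; c\,\Big(\int_{S}|\dr_\theta U|^2\,d\theta\Big)\int_{w(p)-\delta}^{w(p)}\frac{dw}{w(p)-w},
\]
and the $w$-integral diverges because the orbits collapse onto the single point $p$. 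As $U\in H^1(\Omega)$ the left-hand side is finite, which forces $\dr_\theta U\equiv 0$; hence $U$ is constant near $p$, and since $U$ is constant along orbits and every orbit in $B_p$ eventually enters this neighbourhood, $U$ is constant on all of $B_p$.

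It then follows that $\g U=0$ a.e.\ on $\bigcup_p B_p$, hence a.e.\ in $\Omega$; since $\Omega$ is connected $U$ is constant, and the constraint $\int_\Omega U=0$ gives $U\equiv 0$. I expect the main obstacle to be the rigorous treatment of the characteristic analysis for a merely $H^1$ (hence possibly discontinuous and unbounded, in two dimensions) function: one has to justify that $U$ is absolutely continuous, with derivative zero, along almost every orbit, to control the global dynamics of $\g w$ using only the hypothesis that its critical points are isolated (including degenerate maxima, the stable manifolds of the saddles, and the behaviour of the flow near $\dr\Omega$), and to make the energy blow-up estimate near the maxima precise.
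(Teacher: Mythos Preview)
Your overall strategy---writing $\Fbf=-\g w$ and analysing the gradient flow of $w$---is exactly the paper's. Both arguments show that every trajectory converges to one of finitely many critical points, partition $\Omega$ into basins, and deduce that $U$ is constant on each basin; the zero-mean condition then finishes.

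The genuine difference lies in how constancy on a basin is established. The paper simply sets $f(t)=U(X(t))$, asserts $f'(t)=\g U(X(t))\cdot\Fbf(X(t))=0$, and concludes $U(x_0)=\lim_{t\to\infty}f(t)=U(p_i)$; it then invokes ``the regularity of $U$'' to match the constants across basins. This is clean but, as you rightly flag, problematic for a bare $H^1$ function in two dimensions (no continuity, no trace along a single curve). Your energy blow-up argument near each attracting critical point is a different device: it uses only $\int|\g U|^2<\infty$ together with the collapse of orbits at $p$ to force $\dr_\theta U\equiv 0$, and is therefore better adapted to the $H^1$ hypothesis. The price is the extra work you list---controlling the $(w,\theta)$ change of variables at possibly degenerate maxima, handling the measure-zero stable sets of saddles, and checking that trajectories stay interior. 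In short: the paper's route is shorter but tacitly assumes more regularity of $U$ than stated; your route is longer but makes honest use of $H^1$.
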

\begin{proof} We use the characteristic method (see, for instance, \cite{evans2010partial}) for solving (\ref{eq:Uorthogonalhomogeneous}). 
For any $x_0\in \Omega$, consider the Cauchy problem:

 \begin{equation}\label{eq:cauchyproblem}
 \left\{ \begin{aligned}
&\frac{d X}{dt} = \Fbf\left(X(t)\right),  \quad t\in \R,\\
& X(0)=x_0 \in \Omega.
 \end{aligned}\right.
 \end{equation} 
We call the set $\{ x(t),t\in \R\}$ the integral curve at $x_0$.
 Since $\sigma \in \mathcal{C}^{0,\alpha}(\Omega)$, $\Fbf \in \mathcal{C}^{1,\alpha}(\Omega)$. Then, we can apply the Cauchy-Lipschitz theorem and get global existence and uniqueness of a solution to (\ref{eq:cauchyproblem}).
Denote by $T$ the upper bound on the domain size of integral curves.
Now, assume that $U \in H^1(\Omega)$ is a solution of (\ref{eq:Uorthogonalhomogeneous}).
Since $\Jbf= \textbf{curl } w$, $\Fbf$ can be written as
\begin{align*}
\Fbf= -\nabla w \quad \text{ in \  }\Omega.
\end{align*}
Equation (\ref{eq:cauchyproblem}) reduces to the following gradient flow problem:
 \begin{equation}\label{eq:gradientflow}
 \left\{ \begin{aligned}
&\frac{d X}{dt} = -\nabla w \left(X(t)\right),  \quad t\in \R,\\
& X(0)=x_0 \in \Omega.
 \end{aligned}\right.
 \end{equation} 
Using \cite{alessandrini1992index}, we know that there are finitely many isolated critical points $p_1,\ldots, p_n$, for $w$ on $\Omega$. It is also known (see \cite[p. 204]{hirsch73differential}) that since the sets $w^{-1}\left(]-\infty,c]\right) $ are compact for every $c\in \mathbb{R}$,  $\lim_{t\rightarrow \infty} X(t)$ exists and is equal to one of the equilibrium points $p_1,\ldots, p_n$.
Now, for every $i$, we define $\Omega_i$ the set of points $x_0 \in \Omega$ such that the solution of (\ref{eq:gradientflow})  converges to $p_i$. Therefore, we have $\Omega=\cup_{i=1}^n \Omega_i$. 

Now, for any $i$ consider $x_0\in \Omega_i$, and $X\in \mathcal{C}^1\left([0,T[,\Omega\right)$ the solution of (\ref{eq:gradientflow}). 
We define $f\in \mathcal{C}^0\left(\R^+,\R\right)$ by $f(t)=U(X(t))$. The function $f$ is differentiable on $\R^+$ and $f'(t)=\g U(X(t))\cdot \Fbf(X(t))=0$. Hence, $f$ is  constant. We have
\begin{equation*}
U(x_0)=f(0)=\lim_{t\rightarrow \infty} f(t)=U(p_i)=c_i \in \mathbb{R}.
\end{equation*}
So, $U$ is constant equal to $c_i$ in $\Omega_i$. The regularity of $U$ implies that $\forall i,j \in \llbracket 1,n\rrbracket,$ \mbox{$ c_i=c_j.$ } Therefore $U$ is constant on $\Omega$ and the zero integral condition yields
\begin{equation*}
U=0 \text{ in } \Omega.
\end{equation*}
This shows the uniqueness of a solution  to ($\ref{eq:Uorthogonalhomogeneous}$) and thus, concludes the proof. \end{proof}

In order to solve numerically  (\ref{eq:Uorthogonal}), we use a method of vanishing viscosity \cite{ammari2014mathematical}.
The field $\Abf_1$ is known and we can solve uniquely the following problem:
\begin{equation}\label{eq:Neumannregularized}
\left\{ \begin{aligned}
\g \cdot \left[\left(\eta  I+ \Fbf\Fbf^T\right) \g U^{(\eta)}\right] &= - \g \cdot 
\Fbf \Fbf^T {\Abf}_1 \quad &\text{in } \Omega&,\\
\frac{\dr U^{(\eta)} }{\dr \nu}&= - {\Abf}_1 \cdot \nubf \quad  &\text{on } \dr \Omega&,\\
\int_\Omega U^{(\eta)} &=0,
\end{aligned} \right.
\end{equation} for some small $\eta >0$. Here, $I$ denotes the $2\times 2$ identity matrix. 

\begin{prop} \label{propvis} Let $\sigma_\star$ be the true conductivity. Let $V_\star$ be the solution to (\ref{defv}) with $\sigma =\sigma_\star$. 
The solution $U^{(\eta)} $ of (\ref{eq:Neumannregularized}) converges strongly to $V_\star$ in $H^1(\Omega)$ when $\eta$ goes to zero.
\end{prop}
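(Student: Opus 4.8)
The plan is to view $V_\star$ as an $O(\eta)$-approximate solution of the regularized problem (\ref{eq:Neumannregularized}) and to combine an energy estimate with the uniqueness statement of Proposition \ref{prop:unicity}. First I would write (\ref{eq:Neumannregularized}) in variational form: $U^{(\eta)}$ is the unique element of $\{u\in H^1(\Omega):\int_\Omega u=0\}$ with
\begin{equation*}
\int_\Omega \big(\eta I+\Fbf\Fbf^T\big)\big(\g U^{(\eta)}+\Abf_1\big)\cdot\g\varphi\,dx=\eta\int_\Omega\Abf_1\cdot\g\varphi\,dx\quad\forall\,\varphi\in H^1(\Omega),
\end{equation*}
well-posedness following from Lax--Milgram since $(\eta I+\Fbf\Fbf^T)\xi\cdot\xi\ge\eta|\xi|^2$ together with the Poincar\'e--Wirtinger inequality on the zero-mean subspace (here $\Fbf\in L^\infty(\Omega)$, indeed $\Fbf$ is H\"older continuous in the setting of this section, because $\sigma_\star\in\mathcal{C}^{0,\alpha}(\overline{\Omega})$ gives $V_\star\in\mathcal{C}^{1,\alpha}$ and hence $\Jbf\in\mathcal{C}^{0,\alpha}$). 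The crucial algebraic remark is that $\Jbf=\sigma_\star(\g V_\star+\Abf_1)$ and $\Fbf$ is pointwise orthogonal to $\Jbf$, so $\Fbf\cdot(\g V_\star+\Abf_1)=0$, whence $\Fbf\Fbf^T(\g V_\star+\Abf_1)=0$ and $(\eta I+\Fbf\Fbf^T)(\g V_\star+\Abf_1)=\eta(\g V_\star+\Abf_1)$; consequently $V_\star$ satisfies the identity above with the right-hand side replaced by $\eta\int_\Omega(\g V_\star+\Abf_1)\cdot\g\varphi$. Subtracting, the error $r^{(\eta)}:=U^{(\eta)}-V_\star$, which has zero mean, solves
\begin{equation*}
\int_\Omega\big(\eta I+\Fbf\Fbf^T\big)\g r^{(\eta)}\cdot\g\varphi\,dx=-\eta\int_\Omega\g V_\star\cdot\g\varphi\,dx\quad\forall\,\varphi\in H^1(\Omega).
\end{equation*}

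Next I would test this identity with $\varphi=r^{(\eta)}$, obtaining $\eta\|\g r^{(\eta)}\|_{L^2(\Omega)}^2+\|\Fbf\cdot\g r^{(\eta)}\|_{L^2(\Omega)}^2=-\eta\int_\Omega\g V_\star\cdot\g r^{(\eta)}$. Discarding the nonnegative transport term and dividing by $\eta$ yields the $\eta$-independent bound $\|\g r^{(\eta)}\|_{L^2(\Omega)}\le\|\g V_\star\|_{L^2(\Omega)}$, so by Poincar\'e--Wirtinger the family $(r^{(\eta)})_{\eta>0}$ is bounded in $H^1(\Omega)$; moreover $\|\Fbf\cdot\g r^{(\eta)}\|_{L^2(\Omega)}^2\le\eta\|\g V_\star\|_{L^2(\Omega)}^2\to0$ as $\eta\to0$.

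Then I would identify the limit. Given any sequence $\eta_k\to0$, I extract a subsequence with $r^{(\eta_k)}\rightharpoonup r^\ast$ weakly in $H^1(\Omega)$ and, by Rellich, $r^{(\eta_k)}\to r^\ast$ strongly in $L^2(\Omega)$, so in particular $\int_\Omega r^\ast=0$. Since multiplication by the fixed $L^\infty$ field $\Fbf$ is weakly continuous on $L^2(\Omega)$, one has $\Fbf\cdot\g r^{(\eta_k)}\rightharpoonup\Fbf\cdot\g r^\ast$ weakly in $L^2(\Omega)$, while on the other hand $\Fbf\cdot\g r^{(\eta_k)}\to0$ strongly; hence $\Fbf\cdot\g r^\ast=0$ a.e. in $\Omega$. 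Thus $r^\ast\in H^1(\Omega)$ satisfies the transport equation $\Fbf\cdot\g r^\ast=0$ together with $\int_\Omega r^\ast=0$ and (in the limit) the homogeneous boundary condition, so it is an admissible competitor in Proposition \ref{prop:unicity}, which forces $r^\ast=0$.

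Finally I would upgrade this to strong convergence. The energy identity also gives $\|\g r^{(\eta)}\|_{L^2(\Omega)}^2\le-\int_\Omega\g V_\star\cdot\g r^{(\eta)}$; along the subsequence the right-hand side tends to $-\int_\Omega\g V_\star\cdot\g r^\ast=0$, because $\g r^{(\eta_k)}\rightharpoonup\g r^\ast=0$ weakly in $L^2(\Omega)$ and $\g V_\star\in L^2(\Omega)$, so $\|\g r^{(\eta_k)}\|_{L^2(\Omega)}\to0$ and, with Poincar\'e--Wirtinger, $r^{(\eta_k)}\to0$ in $H^1(\Omega)$. Since every sequence $\eta_k\to0$ admits such a subsequence and the limit is always the same, the whole family converges, i.e.\ $U^{(\eta)}\to V_\star$ strongly in $H^1(\Omega)$ as $\eta\to0$. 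I expect the identification step to be the main obstacle: it rests entirely on Proposition \ref{prop:unicity} (hence on the admissibility of the data and on the characteristic/gradient-flow argument behind it), and some care is needed to see that the weak limit $r^\ast$ is genuinely an admissible competitor, in particular concerning its boundary behaviour; by contrast, once $r^\ast$ is known to vanish, the quantitative decay of $\|\g r^{(\eta)}\|_{L^2(\Omega)}$ comes essentially for free from the same energy identity.
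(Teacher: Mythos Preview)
Your argument is correct and follows essentially the same route as the paper's: derive an energy identity for the error $r^{(\eta)}=U^{(\eta)}-V_\star$, use it to obtain a uniform $H^1$ bound together with $\|\Fbf\cdot\nabla r^{(\eta)}\|_{L^2}\to 0$, extract a weak limit, invoke Proposition~\ref{prop:unicity} to identify it as zero, and then upgrade weak to strong convergence via the same identity. Two minor remarks: (i) your variational identity for $U^{(\eta)}$ drops a boundary contribution (this is what produces the extra term $\eta\int_{\partial\Omega}\widetilde U^{(\eta)}\,\Abf_1\cdot\nubf$ in the paper's energy identity~(\ref{eq:IPPUeta}), arising from $\partial V_\star/\partial\nu=-\Abf_1\cdot\nubf$), but since it is of order $\eta$ times $\|r^{(\eta)}\|_{H^1}$ by the trace inequality it changes none of your estimates; (ii) your worry about the Neumann boundary condition for the weak limit $r^\ast$ is unnecessary, as the proof of Proposition~\ref{prop:unicity} uses only $\Fbf\cdot\nabla U=0$ in $\Omega$ and $\int_\Omega U=0$.
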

\begin{proof}
We can easily see that $\widetilde{U}^{(\eta)}= U^{(\eta)}-V_\star$ is the solution to
\begin{equation}\label{eq:Utilde_eta}
\left\{ \begin{aligned}
\g \cdot \left[\left(\eta  I+ \Fbf\Fbf^T\right) \g\widetilde{U}^{(\eta)}\right] &=- \eta \Delta V_\star \quad &\text{in } \Omega&,\\
\frac{\dr \widetilde{U}^{(\eta)} }{\dr \nu}&= 0 \quad  &\text{on } \dr \Omega&,\\
\int_\Omega \widetilde{U}^{(\eta)} &=0.
\end{aligned} \right.
\end{equation}
Multiplying (\ref{eq:Utilde_eta}) by $\widetilde{U}^{(\eta)}$ and integrating by parts over $\Omega$, we find that
\begin{equation}\label{eq:IPPUeta}
\eta \int_\Omega \vert \g \widetilde{U}^{(\eta)}\vert^2 + \int_\Omega \vert \Fbf \cdot \g \widetilde{U}^{(\eta)} \vert^2 = \eta \int_\Omega \g \widetilde{U}^{(\eta)} \cdot \g V_\star  +\eta \int_{\dr \Omega}  \widetilde{U}^{(\eta)} {\Abf}_1 \cdot \nubf,
\end{equation}
since $\ds \frac{\dr \widetilde{U}^{(\eta)} }{\dr \nu}=0$ and $\ds \frac{\dr V_\star}{\dr \nu} = - {\Abf}_1 \cdot \nubf$. 
Therefore, we have
\begin{equation*}
\Vert \widetilde{U}^{(\eta)} \Vert^2_{H^1(\Omega)} \leq  \Vert \widetilde{U}^{(\eta)} \Vert_{H^1(\Omega} \Vert V_\star \Vert_{H^1(\Omega)} + C  \Vert \widetilde{U}^{(\eta)} \Vert_{H^1(\Omega)},
\end{equation*} where $C$ depends only on $\Omega$ and  $\Abf_1$.
This shows that the sequence $( \widetilde{U}^{(\eta)} )_{\eta>0}$ is bounded in $H^1(\Omega)$. Using Banach-Alaoglu's theorem we can extract a subsequence which converges weakly to some $u^*$ in $H^1(\Omega)$. We multiply (\ref{eq:Utilde_eta}) by $u^*$ and integrate by parts over $\Omega$ to obtain
\begin{equation*}
\int_\Omega \big( \Fbf \cdot\g \widetilde{U}^{(\eta)} \big) \big( \Fbf \cdot \g u^*\big) = \eta \left[ \int_\Omega \g V_\star \cdot \g u^* -  \int_\Omega \g \widetilde{U}^{(\eta)}  \cdot  \g u^* + \int_{\dr \Omega} u^*  {\Abf}_1 \cdot \nubf \right].
\end{equation*}
Taking the limit when $\eta $ goes to zero yields
\begin{equation*}
\Vert \Fbf \cdot \g u^*\Vert_{L^2(\Omega)} = 0.
\end{equation*}
Using Proposition \ref{prop:unicity}, we have
\begin{equation*}
u^*=0 \text{ in }  \Omega,
\end{equation*}
since $u^*$ is a solution to (\ref{eq:Uorthogonalhomogeneous}).

Actually, we can see that there is no need for an extraction, since $0$ is the only accumulation point  for $\widetilde{U}^{(\eta)}$ with respect to the weak topology. If we consider a subsequence $\widetilde{U}^{(\phi(\eta))}$, it is still bounded in $H^1(\Omega)$ and therefore, using the same argument as above, zero is an accumulation point of this subsequence.
For the strong convergence, we use (\ref{eq:IPPUeta}) to get
\begin{equation} \label{aboveeq}
\int_\Omega  \vert \g \widetilde{U}^{(\eta)}\vert^2 \leq \int_\Omega \g \widetilde{U}^{(\eta)} \cdot \g V_\star  + \int_{\dr \Omega}  \widetilde{U}^{(\eta)} {\Abf}_1 \cdot \nubf.
\end{equation}
Since $\widetilde{U}^{(\eta)} \rightharpoonup 0$, the right-hand side of (\ref{aboveeq}) goes to zero when $\eta$ goes to zero. Hence,
\begin{equation*}
\Vert \widetilde{U}^{(\eta)}\Vert_{H^1(\Omega)} \longrightarrow 0 \quad \mbox{as } \eta \rightarrow 0.
\end{equation*}
\end{proof}
Now, we take $U^{(\eta)}$ to be the solution of (\ref{eq:Neumannregularized}) and define the approximated resistivity (inverse of the conductivity) by
\begin{equation} \label{defsigeta}
\frac{1}{\sigma_\eta} = \frac{\vert \g U^{(\eta)} + \Abf_1 \vert}{\vert \Jbf \vert}.
\end{equation}
Since 
$$
\frac{1}{\sigma_\star} = \frac{\vert \g V_\star + \Abf_1 \vert}{\vert \Jbf \vert},
$$
Proposition \ref{propvis} shows that $\ds \frac{1}{\sigma_\eta}$ is a good approximation for $\ds \frac{1}{\sigma_\star}$ in the $L^2$-sense.
\begin{prop} Let $\sigma_\star$ be the true conductivity and let $\sigma_\eta$ be defined by (\ref{defsigeta}). We have
\begin{equation*}
\left\Vert\frac{1}{\sigma_\eta} -\frac{1}{\sigma_\star}\right\Vert_{L^2(\Omega)} \longrightarrow 0\quad \text{as } \eta\rightarrow 0.
\end{equation*}
\end{prop}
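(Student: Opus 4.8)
The natural starting point is the pointwise comparison coming directly from the definition (\ref{defsigeta}) and the identity $1/\sigma_\star = |\g V_\star + \Abf_1|/|\Jbf|$ recorded just above the statement. By the reverse triangle inequality, at every $x$ with $\Jbf(x)\neq 0$,
\begin{equation*}
\left|\frac{1}{\sigma_\eta}(x) - \frac{1}{\sigma_\star}(x)\right| \;=\; \frac{\big|\,|\g U^{(\eta)}+\Abf_1| - |\g V_\star+\Abf_1|\,\big|}{|\Jbf(x)|} \;\le\; \frac{|\g U^{(\eta)}(x)-\g V_\star(x)|}{|\Jbf(x)|}.
\end{equation*}
Proposition \ref{propvis} gives $\g U^{(\eta)}\to\g V_\star$ strongly in $L^2(\Omega)$, so the numerator converges to $0$ in $L^2(\Omega)$; the entire difficulty is carried by the weight $1/|\Jbf|$.

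That weight is the obstacle: $1/|\Jbf|$ is unbounded on $\Omega$. Since $\Jbf=\textbf{curl } w$, the zeros of $\Jbf$ are exactly the critical points of $w$, which by admissibility are isolated and hence reduce to finitely many points $p_1,\dots,p_n$. Near a (nondegenerate) such point $|\Jbf|\sim|x-p_i|$, so $1/|\Jbf|^2$ fails to be integrable in two dimensions. Thus one cannot merely square the displayed bound and integrate over $\Omega$; the pointwise estimate does not by itself control $\|1/\sigma_\eta-1/\sigma_\star\|_{L^2(\Omega)}$, and this is where the real work lies.

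To obtain the full-domain conclusion I would split $\Omega=\Omega_\varepsilon\cup(\Omega\setminus\Omega_\varepsilon)$ with $\Omega_\varepsilon=\{|\Jbf|>\varepsilon\}$, and send $\eta\to0$ first, then $\varepsilon\to0$. On $\Omega_\varepsilon$ the weight is bounded by $1/\varepsilon$, so the displayed inequality and Proposition \ref{propvis} give $\|1/\sigma_\eta-1/\sigma_\star\|_{L^2(\Omega_\varepsilon)}\le\varepsilon^{-1}\|\g U^{(\eta)}-\g V_\star\|_{L^2(\Omega)}\to 0$ as $\eta\to0$, for each fixed $\varepsilon$. On the complement $\{|\Jbf|\le\varepsilon\}$, a shrinking neighborhood of the finite critical set, it remains to prove the uniform tail bound
\begin{equation*}
\sup_{\eta>0}\int_{\{|\Jbf|\le\varepsilon\}}\left|\frac{1}{\sigma_\eta}\right|^2\,dx \longrightarrow 0 \quad\text{as }\varepsilon\to0,
\end{equation*}
the analogous (trivial) statement for the bounded function $1/\sigma_\star$ being clear.

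The hard part is exactly this uniform-in-$\eta$ equi-integrability of $1/\sigma_\eta$ near the critical points. I expect to extract it from the regularized equation (\ref{eq:Neumannregularized}): localizing the energy identity (\ref{eq:IPPUeta}) for $\widetilde U^{(\eta)}=U^{(\eta)}-V_\star$ by a cutoff around each $p_i$ should yield $|\g U^{(\eta)}+\Abf_1|\lesssim|\Jbf|$ there, uniformly in $\eta$, so that $|1/\sigma_\eta|^2\lesssim 1/a^2$ stays bounded and the tail integral is $O(|\{|\Jbf|\le\varepsilon\}|)\to0$. An alternative packaging is to pass to a subsequence along which $\g U^{(\eta)}\to\g V_\star$ almost everywhere, whence $1/\sigma_\eta\to1/\sigma_\star$ a.e., and then combine this with convergence of the norms $\|1/\sigma_\eta\|_{L^2(\Omega)}\to\|1/\sigma_\star\|_{L^2(\Omega)}$; a.e.\ convergence together with norm convergence upgrades to strong $L^2$ convergence. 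Either route ultimately hinges on the same estimate near the zeros of $\Jbf$, which is the main obstacle.
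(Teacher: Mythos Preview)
The paper gives essentially no proof of this proposition: it simply states, in the sentence preceding it, that ``Proposition~\ref{propvis} shows that $\frac{1}{\sigma_\eta}$ is a good approximation for $\frac{1}{\sigma_\star}$ in the $L^2$-sense,'' and then records the statement. Implicitly the paper is invoking exactly the reverse triangle inequality you write down,
\[
\left|\frac{1}{\sigma_\eta}-\frac{1}{\sigma_\star}\right|\le \frac{|\nabla U^{(\eta)}-\nabla V_\star|}{|\Jbf|},
\]
together with the $H^1$ convergence from Proposition~\ref{propvis}, and treating the weight $1/|\Jbf|$ as harmless. You are right that this is not harmless: under the admissibility hypothesis the zeros of $\Jbf=\textbf{curl }w$ are the isolated critical points of $w$, and near a nondegenerate one $1/|\Jbf|^2$ is not integrable in two dimensions. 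So the paper's one-line argument has the very gap you diagnose; in that respect your analysis already goes further than the paper's.

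That said, your proposal does not close the gap either. Both of your routes ultimately rest on the same unproven estimate: a uniform-in-$\eta$ bound of the type $|\nabla U^{(\eta)}+\Abf_1|\lesssim |\Jbf|$ near each critical point (equivalently, a uniform $L^\infty$ bound on $1/\sigma_\eta$ in those neighborhoods, or the norm convergence $\|1/\sigma_\eta\|_{L^2}\to\|1/\sigma_\star\|_{L^2}$). You say you ``expect to extract it'' by localizing the energy identity~(\ref{eq:IPPUeta}), but that identity only controls $\eta\|\nabla\widetilde U^{(\eta)}\|_{L^2}^2$ and $\|\Fbf\cdot\nabla\widetilde U^{(\eta)}\|_{L^2}^2$; it gives no control on the component of $\nabla\widetilde U^{(\eta)}$ orthogonal to $\Fbf$, which is precisely what is needed to compare $|\nabla U^{(\eta)}+\Abf_1|$ with $|\Jbf|$ pointwise. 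Your alternative route via a.e.\ convergence plus norm convergence is circular for the same reason: establishing $\|1/\sigma_\eta\|_{L^2}\to\|1/\sigma_\star\|_{L^2}$ is no easier than the original claim. So the main obstacle you correctly isolate remains open in your write-up, just as it does in the paper.
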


\section{Numerical illustrations}
We set $\ds \Omega=\left\{ (x,y)\in \R^2, \ \left(\frac{x}{2}\right)^2 + y^2 <1 \right\}$. We take a conductivity $\sigma \in \mathcal{C}^{0,\alpha} (\Omega)$ as represented on Figure \ref{fig:sigma_real}.  The potential $\Abf_1$ is chosen as $$\ds \Abf_1(x)=10^{-2}\left( \frac{y}{2}+1 ; -\frac{x}{2}+1\right),$$ so that $\Bbf_1$ is constant in space.
\subsection{Optimal control}
We use the algorithm presented in section \ref{sec:optimal}. We set a step size  equal to $8\cdot 10^{-7}$ and $\sigma_{(0)} =3$ as an initial guess. After $50$ iterations, we get the reconstruction shown in Figure \ref{fig:sigma_min}. The general shape of the conductivity is recovered but the conductivity contrast is not recovered. Moreover, the convergence is quite slow. It is worth mentioning that using two nonparallel electric current densities does not improve significantly the quality of the reconstruction.

\begin{figure}
\begin{center}
\includegraphics[scale=0.3]{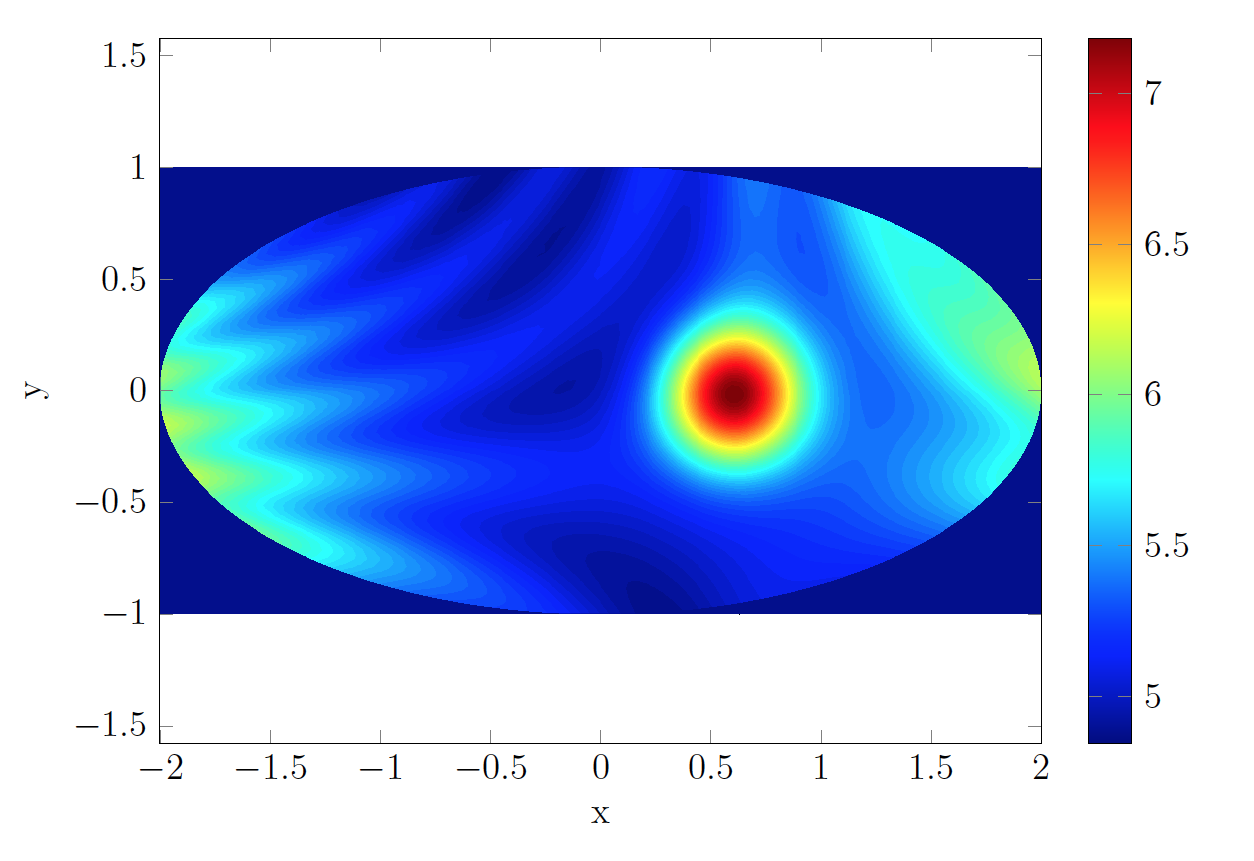}
\caption{\label{fig:sigma_real} Conductivity to be reconstructed.}
\end{center}
\end{figure}

\begin{figure}
\begin{center}
\includegraphics[scale=0.3]{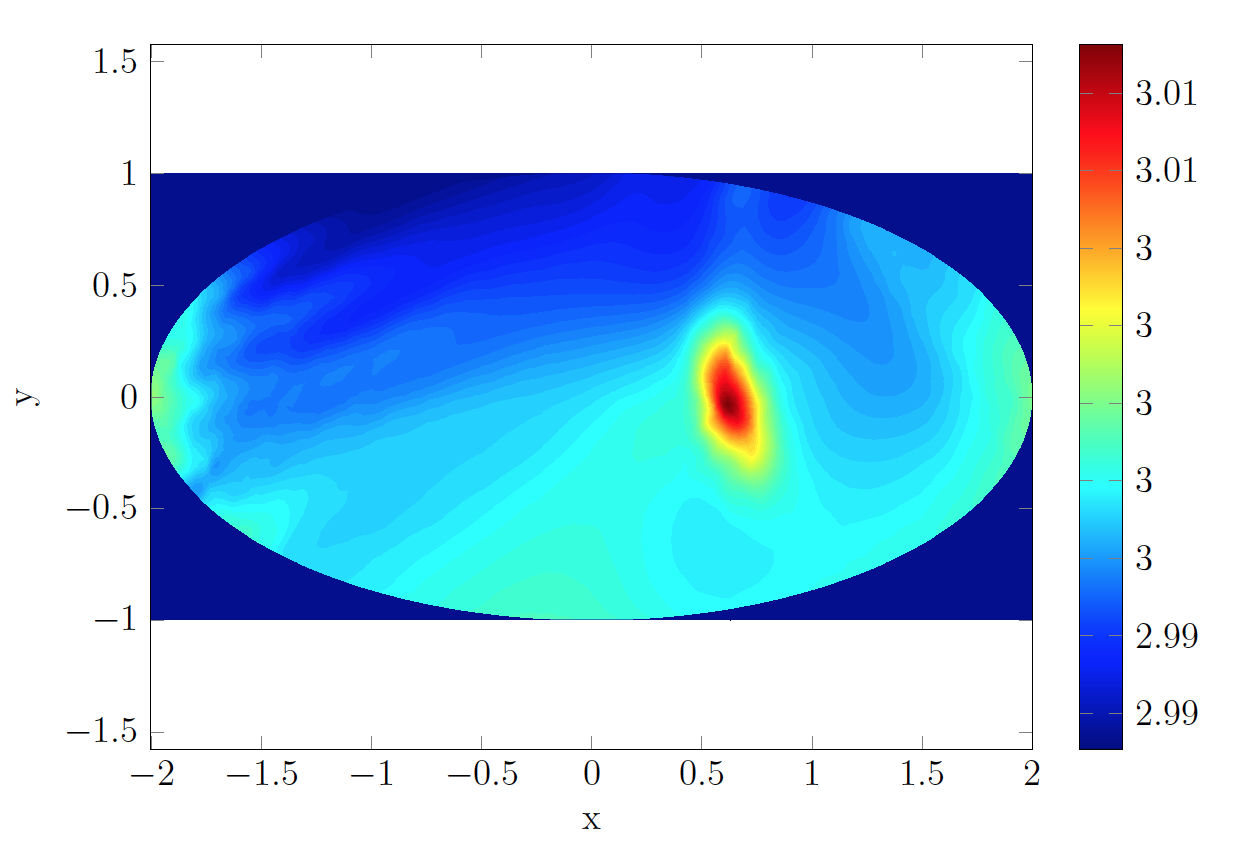}
\caption{\label{fig:sigma_min} Conductivity reconstructed by the optimal control method.}
\end{center}
\end{figure}

\subsection{Fixed-point method}
We use the algorithm described in section \ref{sec:fixed}, but slightly modified.
The operator $\mathcal{G}$ defined by \begin{equation*}
\mathcal{G}[\sigma] :=\sigma \frac{\left(\sigma \g V[\sigma] + \sigma {\Abf}_1 \right) \cdot \Jbf}{\left\vert \Jbf \right\vert^2}
\end{equation*} is replaced by 
\begin{equation*}
\widetilde{\mathcal{G}}[\sigma] :=\frac{\left(\g V[\sigma] +  {\Abf}_1 \right) \cdot \Jbf}{\left\vert \nabla V[\sigma] + {\Abf}_1\right\vert^2},
\end{equation*}
which is analytically the same but numerically is more stable.
Since the term $\left\vert \nabla V[\sigma] + {\Abf}_1\right\vert^2$ can be small,   we smooth out the reconstructed conductivity  $\sigma_{(n)}$ at each step by convolving it with a Gaussian kernel. This makes the algorithm less unstable. The result after $9$ iterations is shown in Figure \ref{fig:sigma_fixed}. The convergence is faster than the gradient descent, but the algorithm still fails at recovering the exact values of the true conductivity.

\begin{figure}
\begin{center}
\includegraphics[scale=0.3]{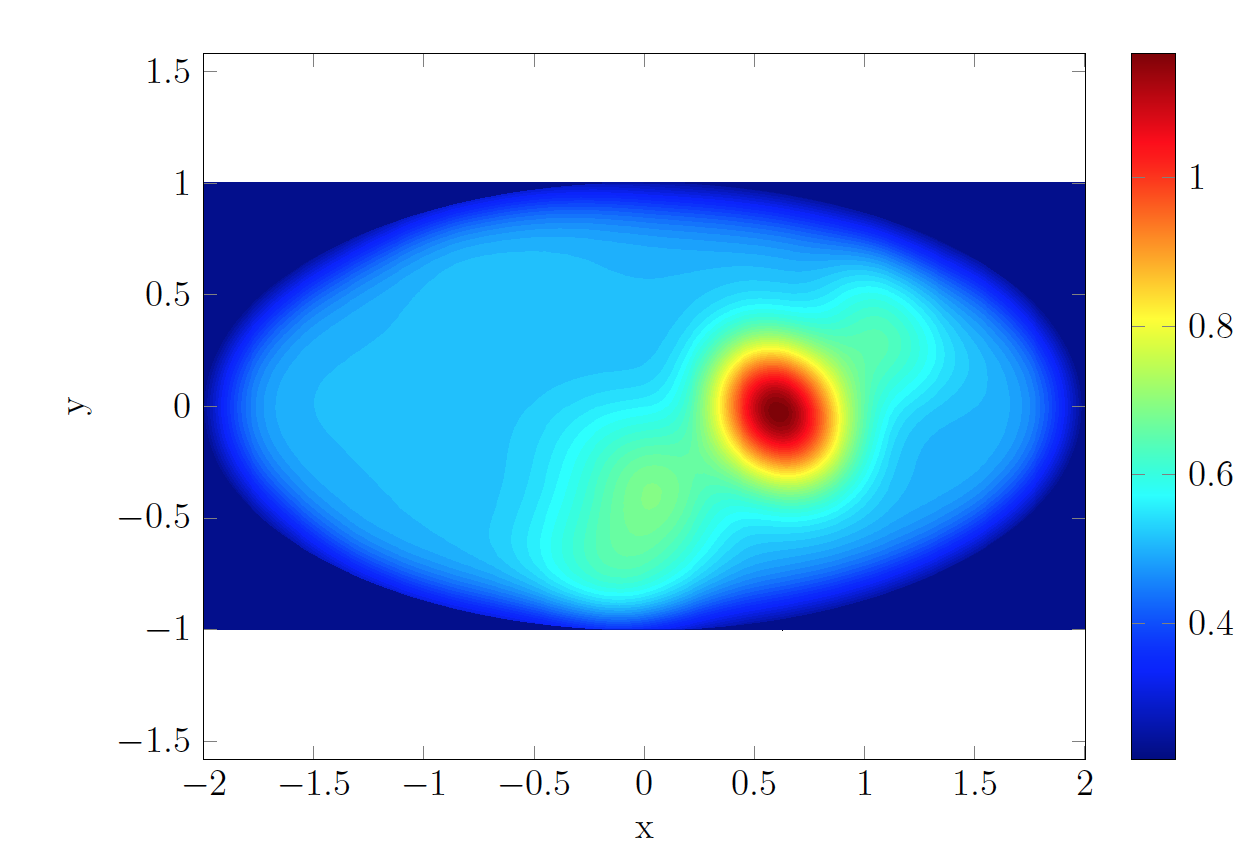}
\caption{\label{fig:sigma_fixed} Conductivity reconstructed by the fixed point method.}
\end{center}
\end{figure}

\subsection{Orthogonal field method}
We set $\eta=5\cdot 10^{-4}$ and perform the computation described in section \ref{sec:ortho}. The result  we get is shown in Figure \ref{fig:sigma_visco}. It is a scaled version of the true conductivity $\sigma_\star$, which means that the contrast is recovered. So assuming we know the conductivity in a small region of $\Omega$  (or near the boundary $\partial \Omega$) we can re-scale the result, as shown in Figure \ref{fig:sigma_visco_scaled}.
When $\eta$ goes to zero, the solution of  (\ref{eq:Neumannregularized}) converges to the true potential $V_\star$ up to a scaling factor which goes to infinity. When $\eta$ is large, the scaling factor goes to one but the solution $U^{(\eta)}$ becomes a "smoothed out" version of $V_\star$.
This method allows an accurate reconstruction of the conductivity by solving only one partial differential equation. It covers the contrast accurately, provided we have a little bit of a prior information on $\sigma_\star$.

\begin{figure}
\begin{center}
\includegraphics[scale=0.3]{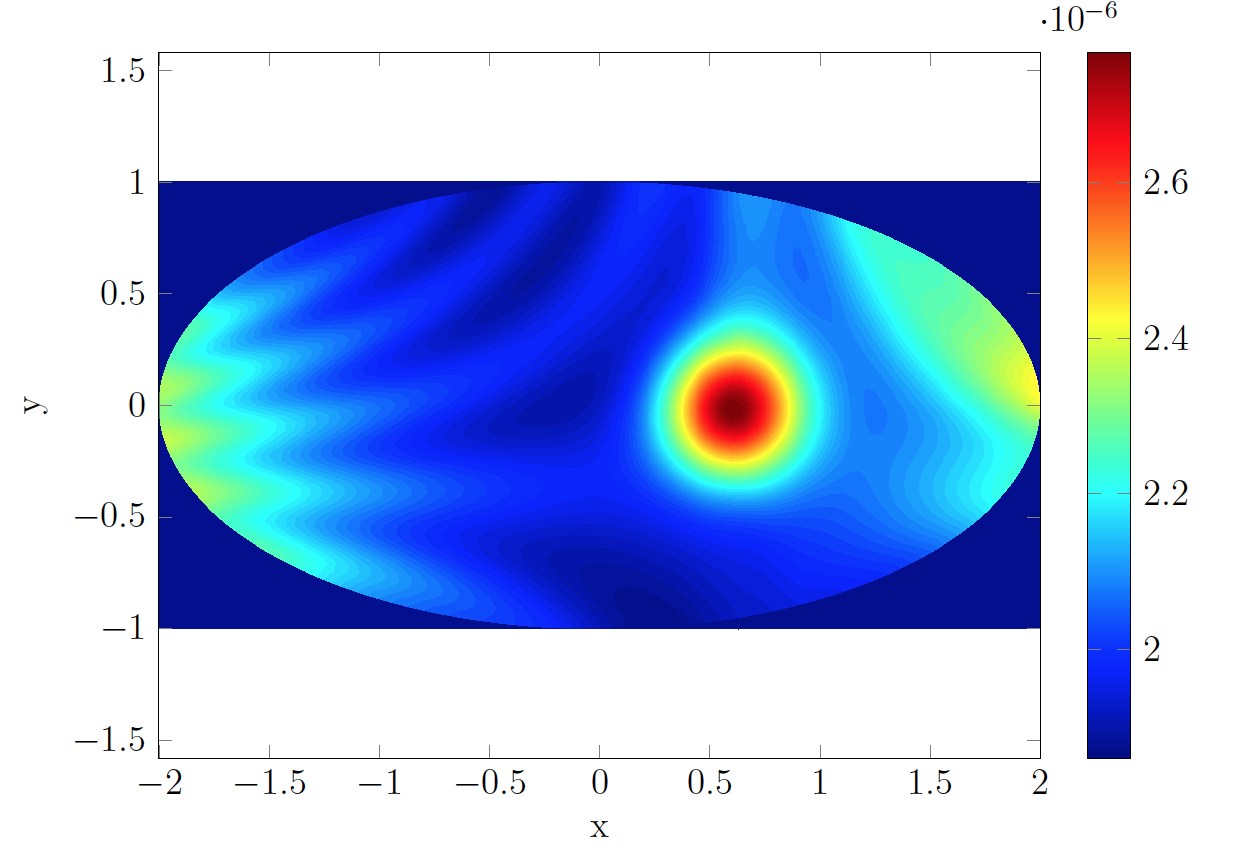}
\caption{\label{fig:sigma_visco} Conductivity recovered by the orthogonal field method before scaling.}
\end{center}
\end{figure}

\begin{figure}
\begin{center}
\includegraphics[scale=0.3]{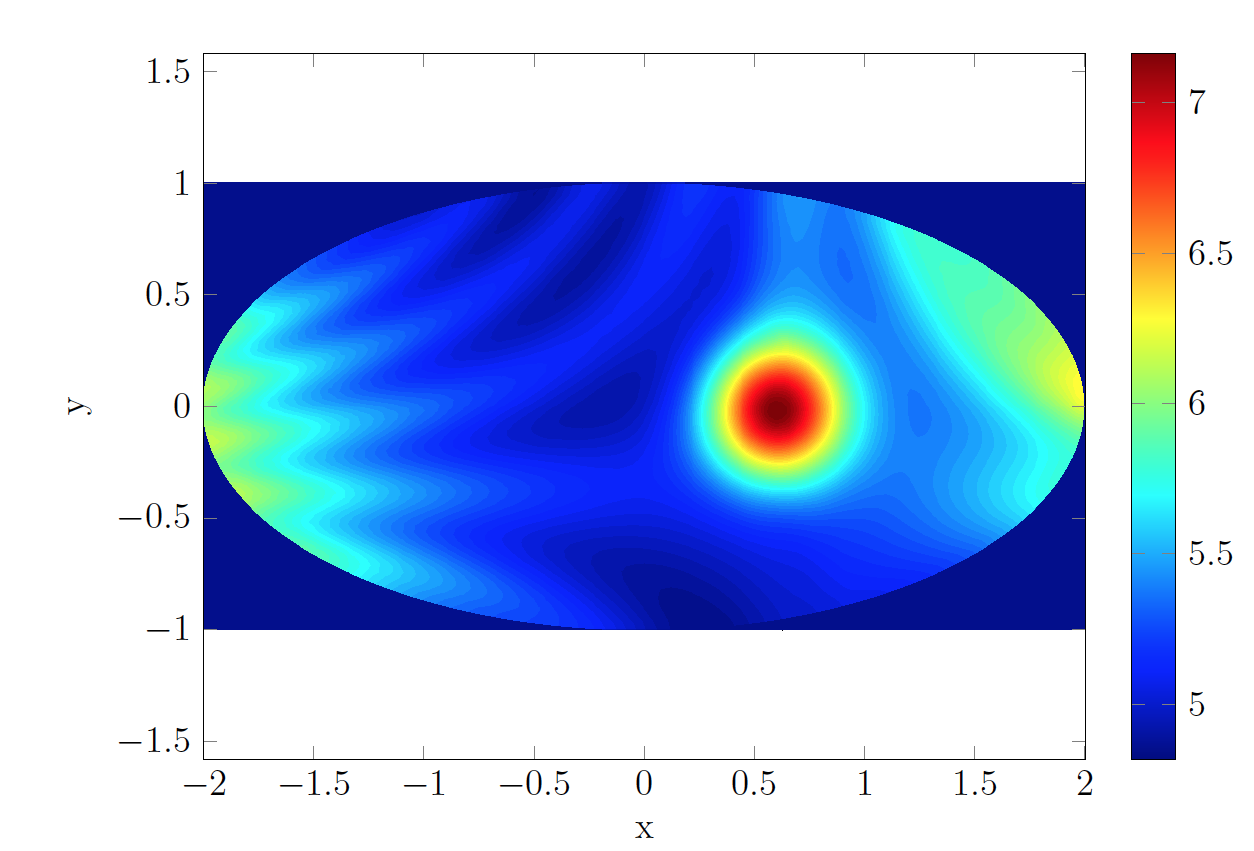}
\caption{\label{fig:sigma_visco_scaled} Conductivity recovered by the orthogonal field method after scaling.}
\end{center}
\end{figure}
Finally, we study the numerical stability with respect to measurement noise of the orthogonal field method. We compute the relative error defined by
\begin{equation*}
e:=\frac{\Vert \sigma_{\eta} - \sigma_\star \Vert_{L^2}}{\Vert \sigma_\star \Vert_{L^2}},
\end{equation*} averaged over $150$ different realizations of measurement noise on $\Jbf$. The results are shown in Figure \ref{fig:stabilityvisco}. We show the results of a reconstruction with noise level of $2\%$  (resp. $10\%$)  in Figure \ref{fig:visco02} (resp. Figure \ref{fig:visco10}). Clearly, the orthogonal method is quite robust with respect to measurement noise. 
\begin{figure}
\begin{center}
\includegraphics[scale=0.3]{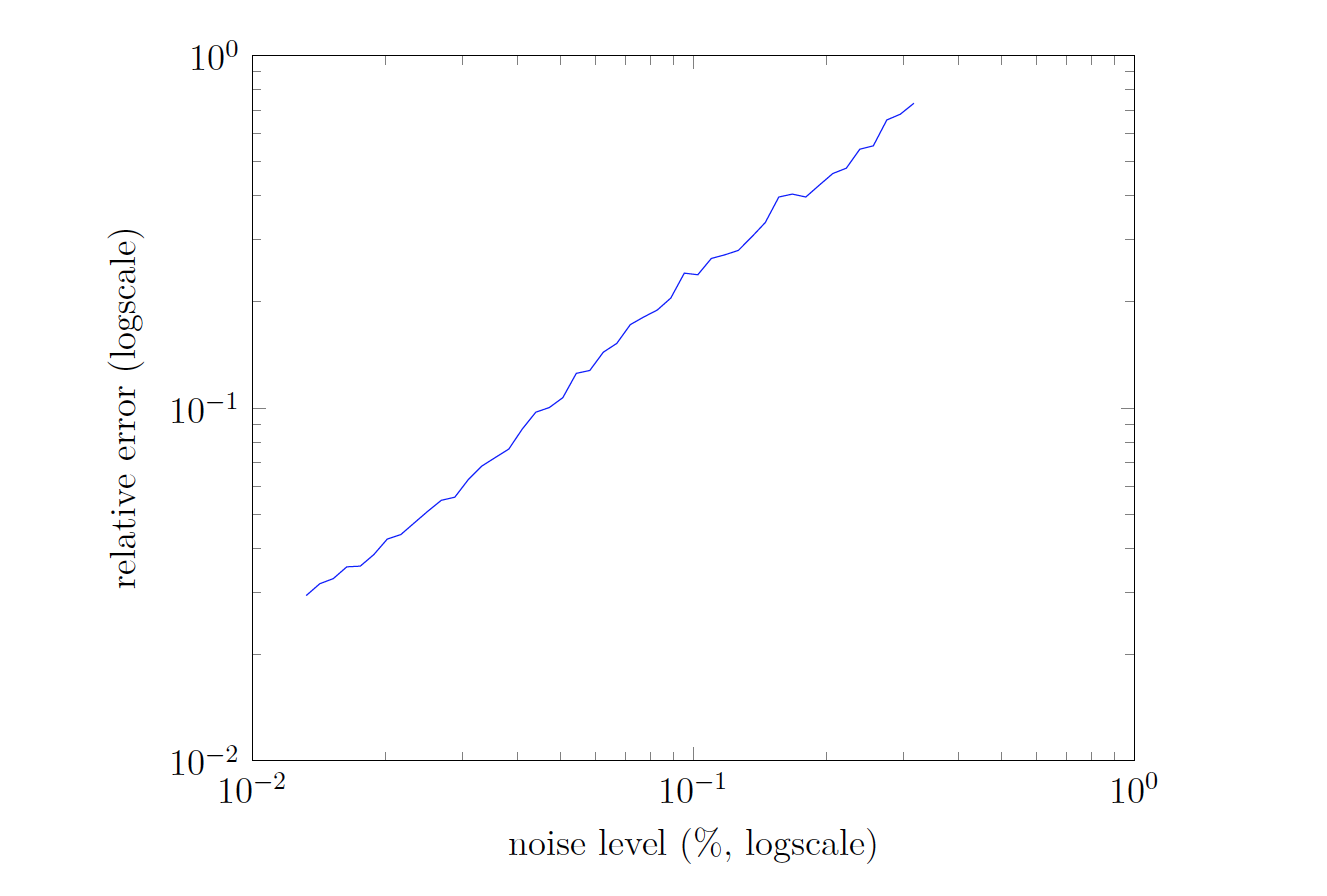}
\caption{\label{fig:stabilityvisco} Relative error with respect to measurement noise.}
\end{center}
\end{figure}

\begin{figure}
\begin{center}
\includegraphics[scale=0.3]{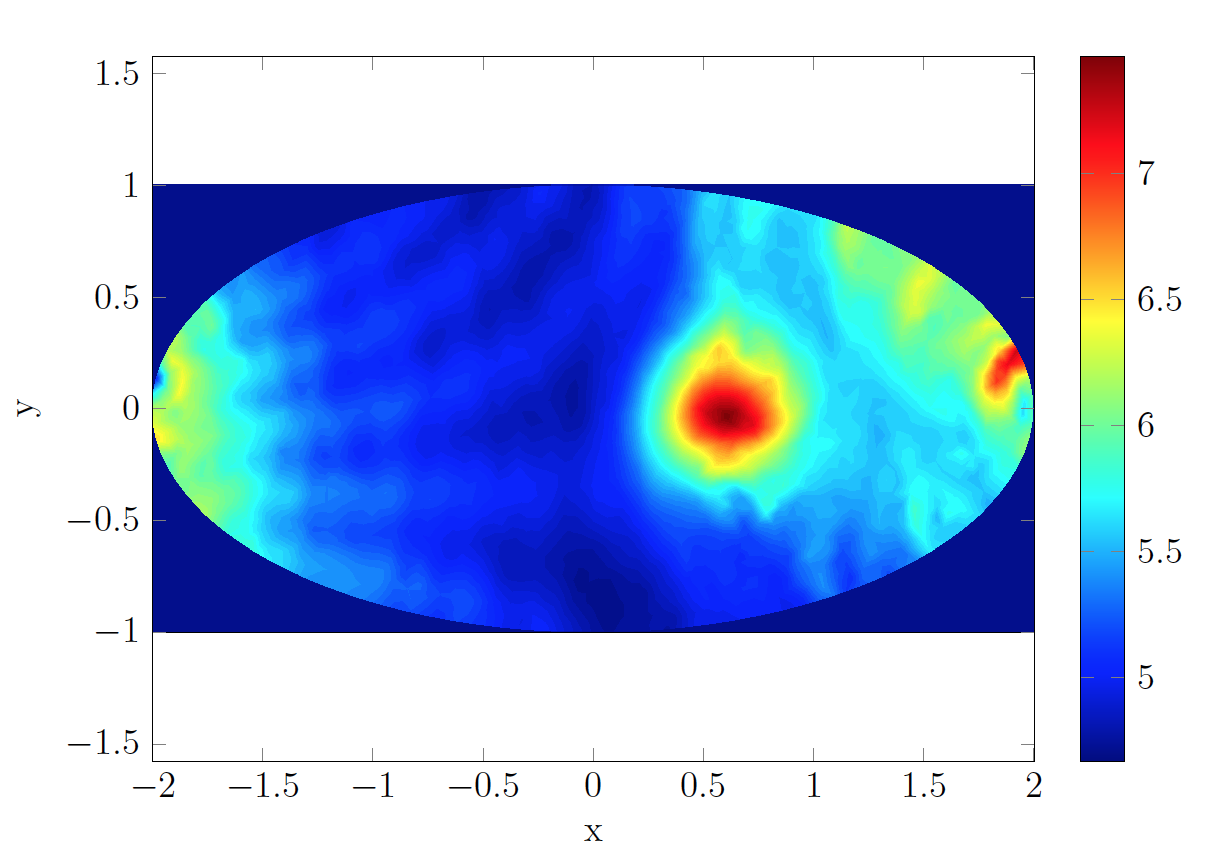}
\caption{\label{fig:visco02} Reconstruction with the orthogonal field method with 
measurement noise level of $2\%$.}
\end{center}
\end{figure}

\begin{figure}
\begin{center}
\includegraphics[scale=0.3]{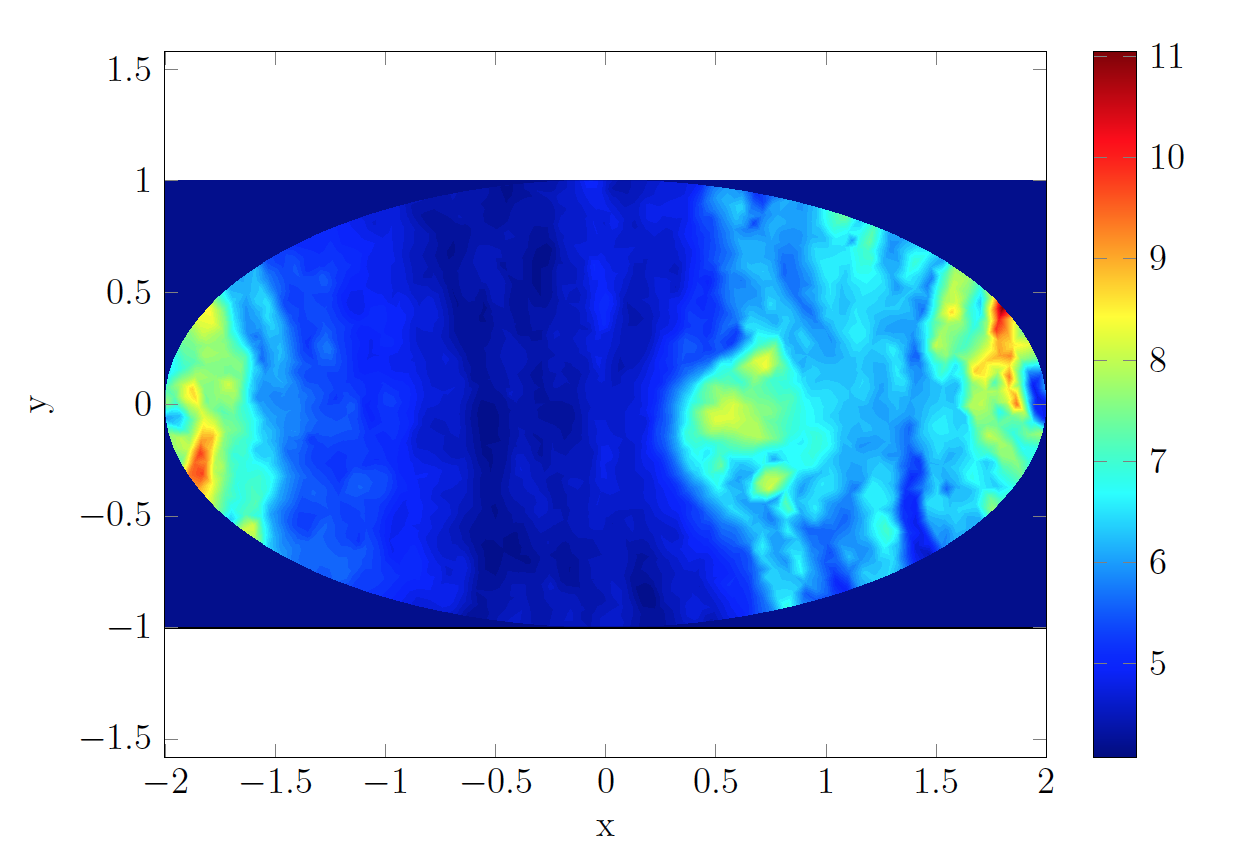}
\caption{\label{fig:visco10} Reconstruction with the orthogonal field method with
measurement noise level of  $10\%$.}
\end{center}
\end{figure}

\section{Concluding remarks}
In this paper we have presented a new mathematical and numerical framework for conductivity imaging using magnetoacoustic tomography with magnetic induction. We developed three different algorithms for conductivity imaging from boundary measurements of the Lorentz force induced tissue vibration. We proved convergence and stability properties of the three algorithms and compared their performance.  
The orthogonal field method performs much better than the optimization scheme and the fixed-point method in terms of both computational time and accuracy. Indeed, it is robust with respect to measurement noise.   In a forthcoming work, we intend to generalize our approach for imaging anisotropic conductivities by magnetoacoustic tomography with magnetic induction.

\end{document}